\DeclareMathOperator{\im}{Im}
\DeclareMathOperator{\Ext}{Ext}
\DeclareMathOperator{\spec}{Spec}
\DeclareMathOperator{\Ht}{ht}
\DeclareMathOperator{\cgrade}{c.grade}
\newtheorem{thm}{Theorem}[section]
\newtheorem{de}[thm]{Definition}
\newtheorem{prop}[thm]{Proposition}
\newtheorem{cor}[thm]{Corollary}
\newtheorem{lemma}[thm]{Lemma}
\theoremstyle{definition}
\newtheorem{rem}[thm]{Remark}
\newtheorem{ex}[thm]{Example}
\begin{document}

%
%
\title{Power Series Rings over Zero-Dimensional Rings}

\author{M. Richard Sayanagi}
\address{Department of Mathematical Sciences, New Mexico State University, 
Las Cruces, NM 88003}
\email{mrsayana@nmsu.edu}

\begin{abstract}
A power series ring over non-Noetherian rings can fail to be flat over the base ring, and its dimension can be infinite, even when the dimension of the base ring is finite. We study the case when the base ring has Krull dimension 0, and consider a version of the power series ring which preserves flatness and whose dimension remains finite. We consider properties that are well-understood in the Noetherian context, such as Krull's Height Theorem and unmixedness, and generalize them to the non-Noetherian setting.
\end{abstract}


\maketitle

\spacing{1.08}

 

\section{Introduction}

All rings are assumed to be commutative with identity. All subrings of a ring $R$ are assumed to share the same identity element as the identity of $R$. The dimension of a ring is taken to be the Krull dimension of the ring, unless specified otherwise.

Given a ring $R$, power series rings over $R$ can exhibit behavior that is not observed in polynomial rings over $R$. If $\dim R = m$, then $m + 1 \leq \dim R[X] \leq 2m +1$ \cite[Theorem 2]{Seidenberg}, so the dimension of a polynomial ring remains finite when the base ring has finite dimension. In particular, $\dim R[X]= m + 1$ if $R$ is Noetherian. For a power series ring over $R$, if $\dim R = m$ and $R$ is Noetherian, then $\dim R \llbracket X \rrbracket = m + 1$, but in 1973, Arnold \cite{Arnold1973} showed that  the dimension of a power series ring over a non-Noetherian ring with finite dimension can be infinite. Arnold provided a necessary and sufficient condition for the dimension of a power series ring over a 0-dimensional ring to be finite, a near-Noetherian property called the SFT (strong finite type)  property. An ideal $I$ in a ring $R$ is called \emph{SFT} if there exists $k \in \mathbb{N}$ and a finitely generated ideal $J \subseteq I$ such that $a^k \in J$ for all $a \in I$, and $R$ is called an \emph{SFT ring} if every ideal of $I$ is SFT. Arnold's paper sparked a series of studies \cite{ccd, coykendall, loper} into the dimension question for power series rings. 

Moreover, while $R[X]$ is flat over $R$ for any ring $R$, $R \llbracket X \rrbracket$ may fail to be flat over $R$ if $R$ is a non-Noetherian ring. A sufficient condition for the power series ring to be flat over the base ring was given by Chase \cite{chase} in 1960, involving the idea of coherence. A ring $R$ is said to be \emph{coherent} if every finitely generated ideal in $R$ is finitely presented. $R$ is a coherent ring if and only if every direct product of flat $R$-modules is flat \cite[Theorem 2.1]{chase}, and it can be shown as a consequence that if $R$ is at most countably infinite, then $R \llbracket X \rrbracket$ is faithfully flat over $R$ if and only if $R$ is coherent \cite[Exercise 19.26]{anderson}.

%

One goal of this paper is to construct a version of the power series ring over a non-Noetherian base ring that exhibits dimension and flatness properties seen in the Noetherian context. We present a version of the power series ring that is a subring of the power series ring over a 0-dimensional ring which is in some sense ``large" inside the power series ring, but retains some of the properties seen in power series rings over Noetherian rings. To do so, we use a characterization of 0-dimensional rings given by Gilmer and Heinzer \cite{GH1992}, which provides a way to represent a 0-dimensional ring as a direct limit of Artinian subrings with some integrality conditions. 

The version of the power series ring we construct is non-Noetherian if the base ring is non-Noetherian, but we observe Cohen-Macaulay type properties, such as unmixedness and the ``grade = height" property. We recall that the formal power series ring over Artinian rings are Cohen-Macaulay \cite[Theorem 23.5]{matsumura}. In particular, if $R$ is an Artinian ring, then $R \llbracket X_1, \ldots, X_n \rrbracket$ is a Cohen-Macaulay ring of dimension $n$. Glaz \cite{glaz} introduced ways to characterize Cohen-Macaulayness in the non-Noetherian context, and several later papers \cite{at, H2, HM} have proposed alternative ways to define non-Noetherian Cohen-Macaulayness. In \cite[Remark 5.9]{olb1}, it is shown that complete intersections over 0-dimensional rings satisfy all seven notions of Cohen-Macaulayness in \cite{at}. We discuss the ways in which our version of the power series ring satisfies the different notions of Cohen-Macaulayness in the non-Noetherian context. 


%


\section{Definitions}


Given any ring $S$, we can write $S$ as a directed union of Noetherian subrings. More precisely, we can identify its prime subring $\pi$, which is isomorphic to either $\mathbb{Z}$ or $\mathbb{Z}/n\mathbb{Z}$ for some $n \in \mathbb{N}$, hence Noetherian, then we consider the family of all subrings $S_\alpha \subseteq S$ that are finitely generated over $\pi$. Each $S_\alpha$ is Noetherian, and $\{S_\alpha\}$ is a directed family of Noetherian subrings such that $S = \bigcup_\alpha S_\alpha$. This description of the ring $S$, however, does not give information about the dimension of each $S_\alpha$ in relation to the dimension of $S$, nor can we infer integrality conditions on the extension $S_\alpha \subseteq S$.


For a 0-dimensional ring $R$, Gilmer and Heinzer \cite{GH1992} provide a way to write $R$ as a directed union of Artinian subrings with integrality conditions. 
We will not assume in this paper that a local or semilocal ring is Noetherian.

\begin{prop} \cite[Corollary 5.5]{GH1992} \label{gilmerheinzer}
	Let $R$ be a 0-dimensional ring. If $R$ is integral over an Artinian subring, then there is a directed family $\{R_\alpha\}$ of Artinian subrings such that $R$ is integral over $R_\alpha$ for each $\alpha$, and $R = \bigcup_\alpha R_\alpha$.  In particular, a semilocal 0-dimensional ring is integral over some Artinian subring. Thus, a semilocal 0-dimensional ring can be written as a directed union of Artinian subrings with this integrality condition. 
\end{prop}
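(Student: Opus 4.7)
The plan is to handle the two assertions separately. For the first, given an Artinian subring $A \subseteq R$ over which $R$ is integral, for every finite subset $F \subseteq R$ I set $R_F := A[F]$. Since each element of $F$ is integral over $A$, the ring $R_F$ is a finitely generated $A$-module; a finitely generated module over an Artinian ring is Artinian, so $R_F$ is itself an Artinian ring. The family $\{R_F\}$, indexed by finite subsets of $R$ under inclusion, is directed, its union is all of $R$, and each $R_F$ contains $A$, so $R$ is integral over $R_F$. This disposes of the first assertion.

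For the \emph{in particular} claim, let $R$ be semilocal $0$-dimensional with maximal ideals $m_1, \ldots, m_n$. The nilradical $N = m_1 \cap \cdots \cap m_n$ coincides with the Jacobson radical, so the Chinese Remainder Theorem yields $R/N \cong (R/m_1) \times \cdots \times (R/m_n)$, a finite product of fields. The primitive idempotents of $R/N$ lift through the nil ideal $N$ to a complete orthogonal set in $R$, giving $R \cong R_1 \times \cdots \times R_n$ with each $R_i$ local $0$-dimensional. It then suffices to find an Artinian subring of each $R_i$ over which $R_i$ is integral, and to take the product for $R$.

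This reduces the problem to a local $0$-dimensional ring $(S, m)$ with residue field $k = S/m$. Since $m$ is nil, every element of $m$ is automatically integral over any subring of $S$, so the entire problem collapses to finding an Artinian subring $A \subseteq S$ whose image in $k$ is a subring over which $k$ is algebraic: given such $A$, for $x \in S$ one picks a monic $p(T) \in A[T]$ with $p(\bar x) = 0$ in $k$, observes that $p(x) \in m$ is nilpotent with $p(x)^\ell = 0$ for some $\ell \geq 1$, and concludes that $x$ is a root of the monic polynomial $p(T)^\ell \in A[T]$. The natural candidate for $A$ is obtained by lifting a transcendence basis of $k$ over its prime field to $S$ and adjoining it to an appropriate coefficient subring --- either the prime subfield in equal characteristic, or a complete local Artinian lift in mixed characteristic.

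The main obstacle is arranging this candidate to be \emph{Artinian} rather than merely Noetherian. In equal characteristic one must verify that the lifted transcendence basis remains algebraically independent in $S$, so that the resulting subring behaves as a subfield over which $k$ is algebraic; in mixed characteristic the additional subtlety is that $S$ need not contain a coefficient field, and one must adapt a Cohen-structure-style argument to the non-Noetherian $0$-dimensional setting to lift the prime field of $k$ to an Artinian subring of $S$. Carrying out this construction carefully is the substantive content of Gilmer and Heinzer's Corollary 5.5.
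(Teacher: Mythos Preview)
The paper does not give its own proof of this proposition; it is quoted verbatim from \cite[Corollary 5.5]{GH1992} and used as a black box. So there is no argument in the paper to compare against, and your proposal in fact supplies strictly more than the paper does.

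Your treatment of the first assertion is correct and complete, and it is exactly the construction the paper later formalizes as the family $\mathcal{F}(R_0,R)$: taking all finitely generated $A$-subalgebras of $R$, which are finite $A$-modules and hence Artinian, directed under inclusion, with union $R$.

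Your outline for the ``in particular'' claim is a sound reduction. Idempotents lift through the nil ideal $N$, so the product decomposition into local $0$-dimensional rings is valid, and in the local case your observation that every $x\in S$ satisfies $p(x)^\ell=0$ for a suitable monic lift $p$ is exactly the right mechanism. One small terminological correction: there is no genuine mixed characteristic here. If $\operatorname{char} k = p>0$ then $p\in m$ is nilpotent, so $\operatorname{char} S = p^a$ for some $a\ge 1$; the dichotomy is really between ``$S$ contains its prime field'' ($a=1$ or $\operatorname{char} k = 0$) and ``$\operatorname{char} S = p^a$ with $a>1$''. In the first case your lifted transcendence basis $\{s_i\}$ is automatically algebraically independent over the prime field $F$ (any relation would descend to $k$), and since $F[\{s_i\}]\cap m = 0$ every nonzero element is a unit in $S$, so $F(\{s_i\})\subseteq S$ is the desired Artinian subring. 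The $a>1$ case is where the real work lies, and you are right to flag it as the substantive content of the Gilmer--Heinzer argument rather than attempt it here.
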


It will be convenient to be able to refer to an Artinian subring $R_\alpha$ of a 0-dimensional ring $R$ such that $R_\alpha \subseteq R$ is an integral extension. We define such Artinian subrings here.

\begin{de} 
	Let $R$ be a 0-dimensional ring. We say that an Artinian subring $R_0 \subseteq R$ is \emph{large} in $R$ if $R_0 \subseteq R$ is an integral extension.
\end{de}

If $R$ is a semilocal ring, Gilmer and Heinzer \cite[Theorem 5.4, Corollary 5.5]{GH1992} give a canonical way to build a directed family of large Artinian subrings of $R$ so that their directed union is $R$. This involves the construction of an Artinian subring $R_0$ that is large in $R$, and considering the family of all subrings $R$ that are finitely generated over $R_0$. We introduce some notation associated with this construction of a directed family of subrings of $R$.

\begin{de}
	Let $R \subseteq S$ be an integral extension of 0-dimensional rings. Then $(R, S)$ is called a 0-dimensional pair \cite[Corollary 4.2]{GH1992}. Given a 0-dimensional pair $(R, S)$, let $\mathcal{F}(R,S)$ be the family of all subrings of $S$ that are finitely generated over $R$, i.e.
	\[
		\mathcal{F}(R,S) = \{ S_\alpha \subseteq S \, | \, S_\alpha \text{ is finitely generated over }R\}.
	\]
\end{de}

Observe that $\mathcal{F}(R,S)$ is a directed family of subrings such that the directed union $\bigcup_\alpha S_\alpha$ is equal to $S$. We will simply write $\mathcal{F}$ for $\mathcal{F}(R,S)$ when the context is clear. We note that for each $\alpha$, $R \subseteq S_\alpha$ is an integral ring extension, so $S_\alpha$ can be viewed as a finitely generated $R$-module. Moreover, each subring $S_\alpha \in \mathcal{F}$ is an Artinian ring that is large in $S$. So $\mathcal{F}$ is a directed family of large Artinian subrings of $S$ such that $\bigcup_\alpha S_\alpha = S$.


We consider how to form power series rings in accordance with this directed union construction of a 0-dimensional ring $R$ in terms of large Artinian subrings.

\begin{lemma} \label{directedunion}
	Let $R$ be 0-dimensional ring, and suppose there is an Artinian subring $R_0$ that is large in $R$. Write $\mathcal{F}$ for the family $\mathcal{F}(R_0, R) = \{R_\alpha\}$, and consider the set
	\[
		S = \Big\{ f \in R \llbracket X_1, \ldots, X_n \rrbracket \ \Big| \ 
		\text{there is some } R_\alpha \in \mathcal{F} \text{ such that all coefficients of } f \text{ are in  } R_\alpha \Big\}.
	\]
	Then $S$ is a ring, and is equal to the directed union $\displaystyle{\bigcup_\alpha (R_\alpha \llbracket X_1 \ldots, X_n \rrbracket)}$.
\end{lemma}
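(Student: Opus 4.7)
The plan is to first establish the set equality $S = \bigcup_\alpha R_\alpha \llbracket X_1, \ldots, X_n \rrbracket$ by unwinding the definitions, and then to deduce that $S$ is a ring from the fact that it is a directed union of rings.

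For the equality, I would argue by double containment. If $f$ lies in some $R_\alpha \llbracket X_1, \ldots, X_n \rrbracket$, then all coefficients of $f$ lie in the single subring $R_\alpha \in \mathcal{F}$, so $f \in S$; this gives $\bigcup_\alpha R_\alpha \llbracket X_1, \ldots, X_n \rrbracket \subseteq S$. Conversely, if $f \in S$, then by definition there exists $R_\alpha \in \mathcal{F}$ containing every coefficient of $f$, which is precisely the statement that $f \in R_\alpha \llbracket X_1, \ldots, X_n \rrbracket$; this yields the reverse containment.

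To see that $S$ is a ring, I would verify the ring axioms using the directed union description. Since $R_0$ is finitely generated over itself, $R_0 \in \mathcal{F}$, so $0, 1 \in R_0 \llbracket X_1, \ldots, X_n \rrbracket \subseteq S$. For closure, given $f, g \in S$ with $f \in R_\alpha \llbracket X_1, \ldots, X_n \rrbracket$ and $g \in R_\beta \llbracket X_1, \ldots, X_n \rrbracket$, I use that $\mathcal{F}$ is directed (noted in the discussion preceding the lemma) to obtain some $R_\gamma \in \mathcal{F}$ with $R_\alpha, R_\beta \subseteq R_\gamma$. Then $f, g \in R_\gamma \llbracket X_1, \ldots, X_n \rrbracket$, and since the latter is itself a ring, $f - g$ and $fg$ lie in $R_\gamma \llbracket X_1, \ldots, X_n \rrbracket \subseteq S$. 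Associativity, commutativity, and distributivity in $S$ are inherited from $R \llbracket X_1, \ldots, X_n \rrbracket$.

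There is no real obstacle here; the statement is essentially a bookkeeping check that the ``coefficients-in-a-common-$R_\alpha$'' condition interacts well with the Cauchy product. The only mild subtlety worth flagging is that the product $fg$ of two power series has each coefficient given by a \emph{finite} sum of products of coefficients of $f$ and $g$, so requiring that all of these lie in a single $R_\gamma$ that already contains the coefficients of both $f$ and $g$ is automatic — this is exactly why closure under multiplication does not require any further enlargement within $\mathcal{F}$, and it is also the reason $S$ is generally strictly smaller than $R \llbracket X_1, \ldots, X_n \rrbracket$, since an arbitrary power series over $R$ may have coefficients that cannot all be captured by any single finitely generated $R_0$-subalgebra.
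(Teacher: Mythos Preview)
Your proof is correct and follows essentially the same approach as the paper: establish the set equality by double containment from the definitions, then conclude that $S$ is a ring because a directed union of (sub)rings is a ring. The paper simply invokes the latter fact without spelling out the closure argument, whereas you unpack it explicitly using directedness of $\mathcal{F}$, but the argument is otherwise identical.
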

\begin{proof}
	We show $S =  \displaystyle{\bigcup_\alpha (R_\alpha \llbracket X_1 \ldots, X_n \rrbracket)}$. If $f \in S$, all coefficients of $f$ are in some $R_{\alpha_f} \in \mathcal{F}$, so
	\[
		 f \in R_{\alpha_f} \llbracket X_1 \ldots, X_n \rrbracket \subseteq \bigcup_\alpha (R_\alpha \llbracket X_1 \ldots, X_n \rrbracket).
	\]
	Next, let $g \in  \bigcup_\alpha (R_\alpha \llbracket X_1 \ldots, X_n \rrbracket)$. Then there is some $R_{\alpha_g} \in \mathcal{F}$ such that $g \in R_{\alpha_g} \llbracket X_1 \ldots, X_n \rrbracket$. This means all coefficients of $g$ are in $R_{\alpha_g}$, so that $g \in S$.

	The directed union of rings is a ring, so $S =\displaystyle{\bigcup_\alpha (R_\alpha \llbracket X_1 \ldots, X_n \rrbracket)}$ is a ring.
\end{proof}

We apply Lemma \ref{directedunion} to define the \emph{ring of Artinian power series}, our main object of study.

\begin{de}\label{artpow}
	Let $R$ be a 0-dimensional ring, and suppose there is an Artinian subring $R_0$ that is large in $R$. Write $\mathcal{F}$ for the family $\mathcal{F}(R_0, R) = \{R_\alpha\}$. If $f \in R\llbracket X_1, \ldots, X_n \rrbracket$ is a power series and all coefficients of $f$ are contained in some $R_\alpha \in \mathcal{F}$, we say that $f$ is an \emph{Artinian power series}. Consider the set of all Artinian power series,
	\[
		\mathcal{F}\llbracket X_1, \ldots, X_n \rrbracket = \big\{ f \in R \llbracket X_1, \ldots, X_n \rrbracket \ \big| \ f \text{ is an Artinian power series} \big\}.
	\]
	We call $\mathcal{F}\llbracket X_1, \ldots, X_n \rrbracket$ the ring of Artinian power series over the pair $(R_0, R)$ in $n$ indeterminates. By Lemma \ref{directedunion}, we have
	\[
		\mathcal{F}\llbracket X_1, \ldots, X_n \rrbracket  = \bigcup_\alpha \big( R_\alpha \llbracket X_1, \ldots, X_n \rrbracket \big)
	\]
	so that $\mathcal{F}\llbracket X_1, \ldots, X_n \rrbracket$ is a directed union of power series rings.
\end{de}

Since any 0-dimensional semilocal ring $R$ is integral over some Artinian subring $R_0$ by Proposition \ref{gilmerheinzer}, the ring of Artinian power series can always be defined for a 0-dimensional semilocal ring. 

\begin{rem}
	Let $R$ be a 0-dimensional ring that is integral over an Artinian subring $R_0$. Write $\mathcal{F} = \mathcal{F}(R_0, R)$. We note that 
	\[
		R \subseteq R[X_1, \ldots, X_n] \subseteq \mathcal{F}\llbracket X_1, \ldots, X_n\rrbracket \subseteq R \llbracket X_1, \ldots, X_n \rrbracket.
	\]
	If $R$ is non-Noetherian, then $S = \mathcal{F}\llbracket X_1, \ldots, X_n\rrbracket$ is non-Noetherian also. Indeed, let $I$ be an ideal of $R$ that is not finitely generated, so there is a countably infinite subset $\{r_i\}_{i =0}^\infty \subseteq I$ such that $r_{i+1} \notin (r_0, \ldots, r_i)R$. Then we have an increasing chain of ideals
	\[
		(r_0)R \subsetneq (r_0, r_1)R \subsetneq \cdots \subsetneq (r_0, r_1, \ldots, r_i)R \subsetneq \cdots 
	\]
	that does not stabilize. Extending these ideals to $S$, we claim
	\[
		(r_0, \ldots, r_i) S \subsetneq (r_0, \ldots, r_i, r_{i+1})S
	\]
	for each $i$. If $(r_0, \ldots, r_i) S = (r_0, \ldots, r_i, r_{i+1}) S$ for some $i$, then $r_{i+1} \in (r_0, \ldots, r_i) S$, so there exist $f_0, \ldots, f_i \in S$ with $r_{i+1} = r_0 f_0 + \cdots + r_i f_i$. Comparing the constant terms in the equation, we have $r_{i+1} = r_0 s_0 + \cdots + r_i s_i$ for some $s_0, \ldots, s_i \in R$, implying $r_{i+1} \in (r_0, \ldots, r_i)R$, a contradiction. Thus, we have an increasing chain of ideals
	\[
		(r_0) S \subsetneq (r_0, r_1) S \subsetneq \cdots \subsetneq (r_0, r_1, \ldots, r_i) S \subsetneq \cdots 
	\]
	that does not stabilize in $S$, so $S$ is not Noetherian.
	
	Moreover, $S$ is strictly smaller than $R \llbracket X_1, \ldots, X_n \rrbracket$. Consider a power series $f(X) \in R\llbracket X \rrbracket$ in one interdeminate, defined by
\[
	f(X) = r_0 + r_1 X + r_2 X^2 + \cdots = \sum_{i=0}^\infty r_i X^i \in R\llbracket X \rrbracket.
\]
Since there is no ring in $\mathcal{F}$ that contains every $r_i$, we have $f \notin S$, and so $S \subsetneq R \llbracket X \rrbracket$.
\end{rem}


\section{Flatness}

The goal of this section is to show that the ring of Artinian power series has flatness properties that are lacking in the formal power series ring when the base ring is non-Noetherian. Recall that the formal power series ring $R \llbracket X \rrbracket$ is flat over $R$ if $R$ is coherent, but may not be flat over $R$ otherwise. A criterion for flatness of a module over a ring based on linear equations will be useful in investigating the flatness of $\mathcal{F}\llbracket X_1, \ldots, X_n \rrbracket$ over $R$.

\begin{lemma}\label{flatnesscriterion} \cite[Theorem 7.6]{matsumura}
	Let $R$ be a ring, and $M$ an $R$-module. $M$ is flat over $R$ if and only if for all choices $r_1, \ldots, r_n \in R$ and $m_1, \ldots, m_n \in M$ satisfying $\sum_{i=1}^n r_i m_i = 0$, there exist $b_{ij} \in R$ and $y_j \in M$ (for $i = 1 \dots, n$ and $j = 1, \ldots, s$) such that
	\[
		\sum_{i=1}^n r_i b_{ij} = 0 \text{ for each } j, \quad \text{ and } \quad
		m_i = \sum_{j=1}^s b_{ij} y_j \text{ for each }i.
	\]
\end{lemma}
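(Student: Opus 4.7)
The plan is to derive the equational criterion from the standard ideal-theoretic characterization of flatness: $M$ is flat over $R$ if and only if the multiplication map $I \otimes_R M \to M$ is injective for every finitely generated ideal $I \subseteq R$. The equational condition in the statement is just a concrete repackaging of this injectivity, read off through a presentation of $I = (r_1, \ldots, r_n)$.

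The key translation would go as follows. For a given tuple $(r_1, \ldots, r_n)$, let $I = (r_1, \ldots, r_n)$ and consider the short exact sequence $0 \to K \to R^n \xrightarrow{\phi} I \to 0$ with $\phi(e_i) = r_i$ and $K = \ker \phi$. A relation $\sum_i r_i m_i = 0$ corresponds exactly to an element $(m_1, \ldots, m_n) \in M^n$ whose image $\sum_i r_i \otimes m_i \in I \otimes_R M$ dies in $M$. The sought-after data $(b_{ij}, y_j)$ is precisely the data of a preimage of $(m_1, \ldots, m_n)$ under the natural map $K \otimes_R M \to R^n \otimes_R M \cong M^n$: writing such a preimage as $\sum_j (b_{1j}, \ldots, b_{nj}) \otimes y_j$, membership of each $(b_{1j}, \ldots, b_{nj})$ in $K$ yields $\sum_i r_i b_{ij} = 0$ for every $j$, and equality of images yields $m_i = \sum_j b_{ij} y_j$ for every $i$.

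With this dictionary in place, the forward direction follows by tensoring the short exact sequence with $M$: flatness of $M$ preserves exactness of $0 \to K \otimes_R M \to M^n \to I \otimes_R M \to 0$, and combining this with the injectivity of $I \otimes_R M \hookrightarrow R \otimes_R M = M$ (another consequence of flatness), a short diagram chase produces the required preimage in $K \otimes_R M$. The reverse direction is the easier half: given the equational condition, any element $\sum_i r_i \otimes m_i \in I \otimes_R M$ whose image in $M$ is zero can be rewritten using the data as $\sum_j \bigl(\sum_i r_i b_{ij}\bigr) \otimes y_j = 0$, proving injectivity of $I \otimes_R M \to M$ for every finitely generated $I$, and hence flatness of $M$.

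The main obstacle here is purely bookkeeping — keeping straight which tensor product lives in which module and tracking the element through the correct identifications $R^n \otimes_R M \cong M^n$ and $R \otimes_R M \cong M$. There is no conceptual hurdle once the ideal criterion for flatness is taken as input. Since this lemma is the classical equational criterion of Bourbaki/Lazard reproduced in Matsumura, a polished treatment would most likely just invoke the reference rather than re-run the chase.
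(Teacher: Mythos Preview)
The paper does not supply its own proof of this lemma: it is stated with a bare citation to \cite[Theorem 7.6]{matsumura} and used as a black box. Your closing remark anticipated exactly this, and your sketched argument via the ideal-theoretic criterion and the presentation $0 \to K \to R^n \to I \to 0$ is the standard correct proof, so there is nothing to compare.
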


We apply Lemma \ref{flatnesscriterion} to a directed union of rings.

\begin{lemma}\label{directedunionflat}
	Let $\{R_\alpha\}$ be a directed family of rings, and let $\{M_\alpha\}$ be a directed family such that $M_\alpha$ is a flat $R_\alpha$-module for each $\alpha$. Then the directed union $\bigcup_\alpha M_\alpha$ is flat over the directed union $\bigcup_\alpha R_\alpha$.
\end{lemma}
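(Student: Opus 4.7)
The plan is to verify flatness of $M := \bigcup_\alpha M_\alpha$ over $R := \bigcup_\alpha R_\alpha$ via the linear equations criterion of Lemma \ref{flatnesscriterion}. So I would start with an arbitrary relation $\sum_{i=1}^n r_i m_i = 0$ in $M$ with $r_i \in R$ and $m_i \in M$, and aim to produce the required $b_{ij} \in R$ and $y_j \in M$ by pulling the entire situation back to a single index $\beta$ where flatness is already known to hold.

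The first step would be to use directedness to consolidate indices. Each $r_i$ lies in some $R_{\alpha_i}$ and each $m_i$ in some $M_{\alpha'_i}$. Because $\{R_\alpha\}$ and $\{M_\alpha\}$ are directed, I can choose a single index $\beta$ so that all the $r_i$ lie in $R_\beta$ and all the $m_i$ lie in $M_\beta$. Since a directed union is the ordinary union with the subring/submodule structure, the equality $\sum r_i m_i = 0$ already holds inside $M_\beta$. (One should assume, as is standard for directed unions, that the transition maps $M_\alpha \hookrightarrow M$ are injective, so that the relation transfers back without loss of information.)

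Once the relation lives in $M_\beta$, flatness of $M_\beta$ over $R_\beta$ (via Lemma \ref{flatnesscriterion} applied inside $R_\beta$) yields elements $b_{ij} \in R_\beta$ and $y_j \in M_\beta$ satisfying
\[
\sum_{i=1}^n r_i b_{ij} = 0 \text{ in } R_\beta \quad \text{and} \quad m_i = \sum_{j=1}^s b_{ij} y_j \text{ in } M_\beta.
\]
Viewing these same $b_{ij}$ as elements of $R$ and the $y_j$ as elements of $M$ via the inclusions $R_\beta \hookrightarrow R$ and $M_\beta \hookrightarrow M$, the two displayed equations remain valid in $R$ and $M$ respectively. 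This is exactly the witness required by Lemma \ref{flatnesscriterion}, so $M$ is flat over $R$.

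The only real subtlety — the step I expect to need the most care in exposition — is the consolidation into a single index $\beta$: one needs directedness applied to finitely many indices at once, and one needs that the structural maps out of each $R_\alpha$ and $M_\alpha$ are injective so that the relation $\sum r_i m_i = 0$ truly holds in $M_\beta$ and not merely after composing with some later transition map. In the directed-union setting this is automatic, but in a general filtered colimit setting one would instead have to pass to a further index to absorb the kernel of the transition map, which is the usual technical overhead in such arguments.
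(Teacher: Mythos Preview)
Your proposal is correct and follows essentially the same argument as the paper: start from a relation $\sum r_i m_i = 0$, use directedness to find a single index $\beta$ with all $r_i \in R_\beta$ and $m_i \in M_\beta$, apply Lemma \ref{flatnesscriterion} to the flat $R_\beta$-module $M_\beta$ to obtain the $b_{ij}$ and $y_j$, and then view these inside $R$ and $M$. Your added remarks on injectivity of the structural maps and the general filtered-colimit case are valid observations but go slightly beyond what the paper records.
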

\begin{proof}
	Put $M = \bigcup_\alpha M_\alpha$ and $R = \bigcup_\alpha R_\alpha$. Let $\sum_{i=1}^n r_i m_i = 0$ for $r_i \in R$, $m_i \in M$. Then there exists $\beta$ such that $r_1, \ldots, r_n \in R_{\beta}$ and $m_1, \ldots, m_n \in M_{\beta}$. Since $M_\beta$ is flat over $R_\beta$, by Lemma \ref{flatnesscriterion}, there exist $b_{ij} \in R_\beta \subseteq R$ and $y_j \in M_\beta \subseteq M$ ($j = 1, \ldots, s$) such that $\sum_{i=1}^n r_i b_{ij} = 0$ for each $j$ and $\sum_{j=1}^s b_{ij} y_j = m_i$ for each $i$. 

Hence,  from $\sum_{i=1}^n r_i m_i = 0$ where $r_i \in R$ and $m_i \in M$, we have obtained $b_{ij} \in R$ and $y_j \in M$ such that $\sum_{i=1}^n r_i b_{ij} = 0$ for each $j$ and $\sum_{j=1}^s b_{ij} y_j = m_i$ for each $i$. Thus, by Lemma \ref{flatnesscriterion}, $M$ is flat over $R$, i.e. $\bigcup_\alpha M_\alpha$ is flat over $\bigcup_\alpha R_\alpha$.
\end{proof}

Recall that an $R$-module $M$ is faithfully flat over $R$ if and only if $M$ is flat over $R$ and $M \neq \mathfrak{m}M$ for every maximal ideal $\mathfrak{m}$ of $R$ \cite[Theorem 7.2]{matsumura}. Let $R \subseteq S$ be a ring extension, and let $P$ be a prime ideal of $R$. Then $R \subseteq S$ is said to satisfy the \emph{lying over (LO)} property if there exists a prime ideal $Q$ of $S$ such that $Q \cap R = P$. In this case, the prime ideal $Q$ of $S$ is said to lie over $P$. We record the fact that LO holds automatically for ring extensions over a 0-dimensional ring.

\begin{lemma}\label{LOzero} \cite[Result 2.1]{GH1993}
	Let $R$ be a 0-dimensional ring, and let $R \subseteq S$ be a ring extension. Then $R \subseteq S$ satisfies LO. That is, given any prime ideal $P$ of $R$, there exists a prime ideal $Q$ of $S$ such that $Q \cap R = P$.
\end{lemma}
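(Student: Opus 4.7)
The plan is to localize at $P$ and exploit the fact that zero-dimensionality collapses $R_P$ to a local ring with a single prime. Since $\dim R = 0$, every prime of $R$ is simultaneously minimal and maximal, so $P$ is a minimal prime; consequently $R_P := (R \setminus P)^{-1} R$ has a unique prime ideal, namely $P R_P$. This is the structural input that makes the argument work without any Noetherian hypothesis and is the only place where the assumption $\dim R = 0$ is actually used.

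Next, I would set $T := R \setminus P$ as a multiplicatively closed subset of $S$ and form $S_P := T^{-1} S$. Since localization is exact and the inclusion $\iota : R \hookrightarrow S$ is injective, the induced map $\iota_P : R_P \to S_P$ is injective; since $R_P$ is nonzero, so is $S_P$, and hence $S_P$ admits a maximal ideal $M$. The contraction $M \cap R_P$ is a proper prime ideal of $R_P$, and by the uniqueness established above, $M \cap R_P = P R_P$.

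Finally, let $\varphi : S \to S_P$ denote the localization map and set $Q := \varphi^{-1}(M)$; then $Q$ is a prime ideal of $S$ since preimages of primes under ring homomorphisms are prime. Chasing the commutative square $\varphi \circ \iota = \iota_P \circ \pi$, where $\pi : R \to R_P$ is the corresponding localization map for $R$, one computes
\[
	Q \cap R = \iota^{-1}\bigl(\varphi^{-1}(M)\bigr) = \pi^{-1}\bigl(\iota_P^{-1}(M)\bigr) = \pi^{-1}(P R_P) = P,
\]
where the final equality is the standard correspondence between primes of $R$ disjoint from $T$ and primes of $R_P$. The only step requiring any attention is the verification that $S_P \neq 0$, but this is immediate from exactness of localization and the injectivity of $R \hookrightarrow S$; everything else is formal manipulation of prime contractions.
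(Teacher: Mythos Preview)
Your proof is correct. The key observation---that in a zero-dimensional ring every prime is minimal, and that minimal primes always satisfy lying-over via the localization argument you give---is exactly the standard route, and every step checks out. The one point you flag as needing attention, namely $S_P \neq 0$, is handled correctly: exactness of localization on the injection $R \hookrightarrow S$ gives an injection $R_P \hookrightarrow S_P$, and $R_P \neq 0$ since $P$ is a proper ideal.

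Note, however, that the paper does not supply its own proof of this lemma; it simply cites \cite[Result 2.1]{GH1993} and moves on. So there is nothing in the paper to compare your argument against. Your argument is essentially the same as the one Gilmer and Heinzer give in the cited reference: localize so that the contracted prime is forced to be the unique prime of $R_P$.
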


\begin{thm} \label{ffgeneral}
	Let $R$ be a 0-dimensional ring and let $R_0$ be an Artinian subring of $R$. Write $\mathcal{F} = \mathcal{F}(R_0, R) = \{R_\alpha\}$. Then  the ring $\mathcal{F} \llbracket X_1, \ldots, X_n \rrbracket$ is faithfully flat over $R$.
\end{thm}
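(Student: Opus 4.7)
The plan is to combine Lemma \ref{directedunionflat}, which transfers flatness across compatible directed unions, with the 0-dimensional Lying Over principle of Lemma \ref{LOzero} to upgrade flatness to faithful flatness.

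First I would establish flatness. By Definition \ref{artpow}, $\mathcal{F}\llbracket X_1, \ldots, X_n \rrbracket = \bigcup_\alpha R_\alpha \llbracket X_1, \ldots, X_n \rrbracket$, and by construction $R = \bigcup_\alpha R_\alpha$. Each $R_\alpha \in \mathcal{F}$ is Artinian, hence Noetherian, so the classical result that a formal power series ring over a Noetherian ring is flat over the base gives that $R_\alpha \llbracket X_1, \ldots, X_n \rrbracket$ is flat over $R_\alpha$. Setting $M_\alpha := R_\alpha \llbracket X_1, \ldots, X_n \rrbracket$ and noting that whenever $R_\alpha \subseteq R_\beta$ the obvious inclusion $R_\alpha\llbracket X_1, \ldots, X_n\rrbracket \hookrightarrow R_\beta\llbracket X_1, \ldots, X_n\rrbracket$ is an $R_\alpha$-module map, $\{M_\alpha\}$ is a directed family compatible with $\{R_\alpha\}$. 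Lemma \ref{directedunionflat} then gives that $\mathcal{F}\llbracket X_1, \ldots, X_n \rrbracket$ is flat over $R$.

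Next I would promote flatness to faithful flatness. By the criterion recalled before Lemma \ref{LOzero}, a flat $R$-module $M$ is faithfully flat if and only if $\mathfrak{m}M \neq M$ for every maximal ideal $\mathfrak{m}$ of $R$. Fix such an $\mathfrak{m}$. Since $R$ is $0$-dimensional and $R \subseteq \mathcal{F}\llbracket X_1, \ldots, X_n\rrbracket$ is a ring extension, Lemma \ref{LOzero} produces a prime ideal $Q$ of $\mathcal{F}\llbracket X_1, \ldots, X_n \rrbracket$ with $Q \cap R = \mathfrak{m}$. Then
\[
    \mathfrak{m} \cdot \mathcal{F}\llbracket X_1, \ldots, X_n \rrbracket \;\subseteq\; Q \;\subsetneq\; \mathcal{F}\llbracket X_1, \ldots, X_n \rrbracket,
\]
so $\mathfrak{m}\cdot \mathcal{F}\llbracket X_1, \ldots, X_n \rrbracket$ is proper, completing the argument.

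Given the machinery already in place, this proof is essentially a matter of stitching the prior lemmas together, and no step poses a serious obstacle. The only subtlety worth flagging is the verification that the power-series family $\{R_\alpha\llbracket X_1, \ldots, X_n \rrbracket\}$ is genuinely directed over $\{R_\alpha\}$ in a way that feeds Lemma \ref{directedunionflat}, which is immediate from term-by-term inclusion of coefficients. Were one to try to prove faithful flatness directly (without Lemma \ref{LOzero}), showing that $\mathfrak{m}\cdot\mathcal{F}\llbracket X_1,\ldots,X_n\rrbracket$ is proper would be the delicate point, since $\mathfrak{m}$ need not be finitely generated; the Lying Over shortcut from $0$-dimensionality neatly sidesteps this.
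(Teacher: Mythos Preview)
Your proof is correct and follows essentially the same approach as the paper's: both establish flatness by noting each $R_\alpha$ is Artinian (hence Noetherian) so that $R_\alpha\llbracket X_1,\ldots,X_n\rrbracket$ is flat over $R_\alpha$, then invoke Lemma~\ref{directedunionflat}, and both upgrade to faithful flatness via Lemma~\ref{LOzero} to show every maximal ideal of $R$ survives in $\mathcal{F}\llbracket X_1,\ldots,X_n\rrbracket$. The paper phrases the flatness of $R_\alpha\llbracket X_1,\ldots,X_n\rrbracket$ over $R_\alpha$ in terms of the $(X_1,\ldots,X_n)$-adic completion of $R_\alpha[X_1,\ldots,X_n]$, but this is the same classical fact you cite.
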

\begin{proof}
	Put $S = \mathcal{F} \llbracket X_1, \ldots, X_n \rrbracket =  \bigcup_\alpha (R_\alpha \llbracket X_1, \ldots, X_n \rrbracket)$. For each $R_\alpha \in \mathcal{F}$, let $S_\alpha$ be the formal power series ring $R_\alpha \llbracket X_1, \ldots, X_n \rrbracket$, so we have $S_\alpha = R_\alpha \llbracket X_1, \ldots, X_n \rrbracket$ and $S = \bigcup_\alpha S_\alpha$. Note that $S_\alpha$ is the $(X_1, \ldots, X_n)$-adic completion of $R_\alpha[X_1, \ldots, X_n]$, where $R_\alpha$ is Artinian (hence Noetherian), so $S_\alpha$ is flat over $R_\alpha$ for each $\alpha$. By Lemma \ref{directedunionflat}, $S = \bigcup_\alpha S_\alpha$ is flat over $R = \bigcup_\alpha R_\alpha$.

	Next, by Lemma \ref{LOzero}, $R \subseteq S$ satisfies LO, so for any prime ideal $P \subseteq R$, there is a prime ideal $Q \subseteq S$ such that $Q \cap R = P$. Since $P \subseteq Q$,  we have $PS \subseteq QS = Q \neq S$, so $PS \neq S$, i.e. $P$ survives in $S$. Thus, $S$ is faithfully flat over $R$.
\end{proof}

We have thus obtained a subring $\mathcal{F}\llbracket X_1, \ldots, X_n \rrbracket$ of the power series ring $R\llbracket X_1, \ldots, X_n \rrbracket$ over a 0-dimensional ring $R$ which properly contains the polynomial ring and is faithfully flat over $R$, regardless of whether or not $R$ is coherent. 


\section{Dimension}

We next consider the dimension of $\mathcal{F}\llbracket X_1, \ldots, X_n \rrbracket$. We observe first that there is an upper bound on the dimension, due to our directed union construction.

\begin{lemma}\cite[Remark before Proposition 5.1]{GH1992} \label{upperbound}
	Let $S$ be a ring that is the directed union of a family $\{S_\alpha\}$ of subrings, each of dimension at most $n$. Then $\dim S \leq n$.
\end{lemma}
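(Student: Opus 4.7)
The plan is to proceed by contradiction. Suppose $\dim S \geq n+1$, so there exists a strictly increasing chain of prime ideals
\[
P_0 \subsetneq P_1 \subsetneq \cdots \subsetneq P_{n+1}
\]
in $S$. The goal is to transport this chain down to a single $S_\beta$, where it will violate the hypothesis $\dim S_\beta \leq n$.

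First, I would pick witnesses: for each $i = 0, \ldots, n$, choose $x_i \in P_{i+1} \setminus P_i$, which exists by strict containment. This gives finitely many elements $x_0, \ldots, x_n$ of $S$. Since $S = \bigcup_\alpha S_\alpha$ is a directed union, there is some index $\beta$ with $x_0, \ldots, x_n \in S_\beta$.

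Next, I would contract the chain: set $Q_i = P_i \cap S_\beta$ for $i = 0, \ldots, n+1$. Each $Q_i$ is a prime ideal of $S_\beta$, and the inclusions $Q_0 \subseteq Q_1 \subseteq \cdots \subseteq Q_{n+1}$ are immediate from $P_0 \subseteq \cdots \subseteq P_{n+1}$. To see the containments are strict, note that for each $i$, $x_i \in P_{i+1} \cap S_\beta = Q_{i+1}$ while $x_i \notin P_i$, so $x_i \notin Q_i$; hence $Q_i \subsetneq Q_{i+1}$. This produces a chain of length $n+1$ of prime ideals in $S_\beta$, contradicting $\dim S_\beta \leq n$.

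There is no real obstacle here; the only subtlety is remembering to choose the witnesses $x_i$ \emph{before} selecting the index $\beta$, so that a single $S_\beta$ can simultaneously detect all the strict inclusions of the original chain. This is the standard compactness-style argument for chains under directed unions, and it uses nothing beyond the definition of directed union and the fact that contraction of a prime ideal is prime.
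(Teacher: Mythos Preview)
Your argument is correct; this is exactly the standard contraction-of-a-chain argument, and the choice of witnesses $x_i$ before selecting $\beta$ is the only point that needs care, which you handle properly. The paper does not give its own proof of this lemma---it simply cites the remark in \cite{GH1992}---so there is nothing to compare your approach against, but what you have written is the expected proof.
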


To obtain a lower bound on the dimension, we consider the map from power series rings over a 0-dimensional ring $R$ to power series rings over a residue field of $R$.

\begin{lemma}\label{a}
	Let $R$ be a 0-dimensional ring and let $M \in \spec R$. Suppose $R$ can be written as the union of a directed family $\{R_\alpha\}$ of Artinian subrings. Consider the map
	\[
		\phi: \bigcup_\alpha \big( R_\alpha \llbracket X_1, \ldots, X_n \rrbracket \big) \longrightarrow (R/M) \llbracket X_1, \ldots, X_n \rrbracket
	\]
	induced by the map on the coefficients of $f \in R_\alpha\llbracket X_1, \ldots, X_n \rrbracket$ given by
	\[
		\phi_{\alpha}: R_\alpha \stackrel{\pi_\alpha}{\longrightarrow} R_\alpha/M_\alpha \cong 	\frac{R_\alpha + M}{M} \subseteq R/M,
	\]
	where $M_\alpha = M \cap R_\alpha$. Then the map $\phi$ is a ring homomorphism with kernel and image
	\[
		\ker \phi = \bigcup_\alpha \big(M_\alpha \llbracket X_1, \ldots, X_n \rrbracket \big) 
		\quad \text{ and } \quad
		\im \phi =  \bigcup_\alpha \big(k_\alpha \llbracket X_1, \ldots, X_n \rrbracket \big),
	\] 
	where $k_\alpha$ denotes the residue field $\displaystyle{R_\alpha / M_\alpha \cong \frac{R_\alpha + M}{M}}$.
\end{lemma}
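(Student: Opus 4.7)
The plan is to recognize that $\phi$ is simply the coefficient-wise application of the quotient map $\pi: R \to R/M$. Under the isomorphism $R_\alpha/M_\alpha \cong (R_\alpha + M)/M$, the map $\phi_\alpha$ sends $r \in R_\alpha$ to $r + M \in R/M$, which is exactly $\pi|_{R_\alpha}(r)$. Hence the family $\{\phi_\alpha\}$ consists of restrictions of a single map $\pi$, and in particular the $\phi_\alpha$ automatically agree on overlaps. Consequently, $\phi(f)$ does not depend on which $R_\alpha \in \mathcal{F}$ one chooses to contain the coefficients of $f$, so $\phi$ is well-defined on the directed union.

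First I would verify that $\phi$ is a ring homomorphism. Additivity is immediate from the coefficient-wise description. For multiplicativity, given $f = \sum a_I X^I$ and $g = \sum b_J X^J$, I would use directedness of $\{R_\alpha\}$ to find $R_\gamma$ containing all the $a_I$ and $b_J$; then the coefficient of $X^K$ in $fg$ is the finite sum $\sum_{I+J=K} a_I b_J \in R_\gamma$, and the ring map $\pi$ preserves such finite sums and products, giving $\phi(fg) = \phi(f)\phi(g)$.

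Next I would compute the kernel. Since $\phi$ is coefficient-wise $\pi$, a power series $f \in R_\alpha \llbracket X_1, \ldots, X_n \rrbracket$ lies in $\ker \phi$ if and only if every coefficient lies in $\ker \pi \cap R_\alpha = M \cap R_\alpha = M_\alpha$, i.e.\ iff $f \in M_\alpha \llbracket X_1, \ldots, X_n \rrbracket$. Thus $\ker \phi = \bigcup_\alpha M_\alpha \llbracket X_1, \ldots, X_n \rrbracket$. For the image, one inclusion is clear: the coefficients of $\phi(f)$ for $f \in R_\alpha \llbracket X_1, \ldots, X_n \rrbracket$ all lie in $\phi_\alpha(R_\alpha) = k_\alpha$, so $\phi(f) \in k_\alpha \llbracket X_1, \ldots, X_n \rrbracket$. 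Conversely, given $g = \sum c_I X^I$ with every $c_I \in k_\alpha$, I would lift each $c_I$ to some $a_I \in R_\alpha$ via the surjection $\phi_\alpha$; then $f = \sum a_I X^I \in R_\alpha \llbracket X_1, \ldots, X_n \rrbracket$ satisfies $\phi(f) = g$, so $g \in \im \phi$.

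The main obstacle is essentially bookkeeping: one must keep the two descriptions $R_\alpha/M_\alpha$ and $(R_\alpha + M)/M$ of $k_\alpha$ aligned, and verify that the inclusions $k_\alpha \hookrightarrow k_\beta$ for $\alpha \leq \beta$ (coming from $M_\alpha \subseteq M_\beta$) make $\{k_\alpha \llbracket X_1, \ldots, X_n \rrbracket\}$ into a genuine directed system inside $(R/M)\llbracket X_1, \ldots, X_n \rrbracket$. Once $\phi$ is identified as coefficient-wise reduction modulo $M$, each clause of the statement follows by direct inspection of coefficients.
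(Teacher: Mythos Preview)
Your proposal is correct and follows essentially the same approach as the paper: both arguments identify $\phi$ as coefficient-wise reduction and then compute the kernel and image by inspecting coefficients and lifting via the surjection $\phi_\alpha$. Your version is in fact slightly more careful about well-definedness on the directed union and multiplicativity, whereas the paper dispatches the ring-homomorphism claim in one line by saying each $\phi_\alpha$ is a homomorphism.
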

\begin{proof}
	That $\phi$ is a ring homomorphism is clear since $\phi_\alpha$ is a homomorphism for each $\alpha$. 

	Consider $\ker \phi$ next. Let $f \in \ker \phi$. We have $f \in R_\alpha \llbracket X_1, \ldots, X_n \rrbracket$ for some $\alpha$, and $\phi(f) = 0$. Then every coefficient of $f$ must be in $M_\alpha$, so that $f \in M_\alpha \llbracket X_1, \ldots, X_n \rrbracket$. On the other hand, if $g \in M_\alpha \llbracket X_1, \ldots, X_n \rrbracket$, then $\pi_\alpha: R_\alpha \rightarrow R_\alpha / M_\alpha$ maps every coeffcient to 0, so that $g \in \ker \phi$.

	Lastly, we verify the claim about the image of $\phi$. If $f \in \bigcup_\alpha (R_\alpha \llbracket X_1, \ldots, X_n \rrbracket)$, then $f \in R_\alpha \llbracket X_1, \ldots, X_n \rrbracket$ for some $\alpha$, so $\phi(f) \in k_\alpha \llbracket X_1, \ldots, X_n \rrbracket$. Thus, $\im \phi \subseteq \bigcup_\alpha (k_\alpha \llbracket X_1, \ldots, X_n \rrbracket)$. On the other hand, since $\phi_\alpha: R_\alpha \rightarrow k_\alpha$ is an onto map, for any $\overline{f} \in k_\alpha \llbracket X_1, \ldots, X_n \rrbracket$, we can produce $f \in R_\alpha \llbracket X_1, \ldots, X_n \rrbracket$ with appropriate coefficients so that $\phi(f) = \overline{f}$. Hence,  $\im \phi =  \bigcup_\alpha (k_\alpha \llbracket X_1, \ldots, X_n \rrbracket )$.
\end{proof}

Next, we record the observation that the kernel of the map considered in Lemma \ref{a} is an extension of a maximal ideal of $R$.

\begin{prop}\label{minimalprimes}
	Let $R$ be a 0-dimensional ring that can be written as the union of a directed family $\{R_\alpha\}$ of Artinian subrings, and let $M \in \spec R$. Put $S = \bigcup_\alpha (R_\alpha  \llbracket X_1, \ldots, X_n \rrbracket )$, 
	and for each $\alpha$, let $M_\alpha = M \cap R_\alpha$. Then the extension $MS$ of $M$ in $S$ is given by
	\[
		MS = \bigcup_\alpha (M_\alpha \llbracket X_1, \ldots, X_n \rrbracket ).
	\]
\end{prop}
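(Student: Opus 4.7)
The plan is to prove the two containments of the claimed equality separately, exploiting the Artinian (hence Noetherian) nature of each $R_\alpha$ on one side and the directedness of the family $\{R_\alpha\}$ on the other.

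For the containment $\bigcup_\alpha \bigl(M_\alpha \llbracket X_1, \ldots, X_n \rrbracket\bigr) \subseteq MS$, I would fix an $\alpha$ and an element $g \in M_\alpha \llbracket X_1, \ldots, X_n \rrbracket$. Because $R_\alpha$ is Artinian, it is Noetherian, so $M_\alpha$ is a finitely generated ideal, say $M_\alpha = (m_1, \ldots, m_k)R_\alpha$. The standard fact that for a finitely generated ideal $I$ of a Noetherian ring $A$ we have $I \cdot A\llbracket X_1, \ldots, X_n \rrbracket = I \llbracket X_1, \ldots, X_n \rrbracket$ then lets me write $g = \sum_{i=1}^k m_i g_i$ with each $g_i \in R_\alpha \llbracket X_1, \ldots, X_n \rrbracket \subseteq S$. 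Since $m_i \in M_\alpha \subseteq M$, this expresses $g$ as an element of $MS$.

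For the reverse containment $MS \subseteq \bigcup_\alpha \bigl(M_\alpha \llbracket X_1, \ldots, X_n \rrbracket\bigr)$, an arbitrary element of $MS$ is a finite sum $g = \sum_{i=1}^k m_i f_i$ with $m_i \in M$ and $f_i \in S$. By definition of $S$ as the directed union, each $f_i$ has all its coefficients in some $R_{\alpha_i}$, and using directedness of $\{R_\alpha\}$ I can pass to a common $\beta$ with $f_1, \ldots, f_k \in R_\beta \llbracket X_1, \ldots, X_n \rrbracket$ and additionally $m_1, \ldots, m_k \in R_\beta$, enlarging $\beta$ if necessary (here I use that each individual $m_i \in R$ lies in some Artinian subring in the family, and that the family is directed). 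Then each $m_i$ lies in $M \cap R_\beta = M_\beta$, and the sum $\sum m_i f_i$ has all coefficients in $M_\beta$, so $g \in M_\beta \llbracket X_1, \ldots, X_n \rrbracket$.

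The only genuinely substantive step is the identity $M_\alpha \cdot R_\alpha \llbracket X_1, \ldots, X_n \rrbracket = M_\alpha \llbracket X_1, \ldots, X_n \rrbracket$ used in the first containment, and this is where the Noetherian hypothesis on $R_\alpha$ is crucial — the analogous statement would fail for general $M$ in the ambient non-Noetherian ring $R$, which is exactly why the displayed formula must be phrased as a union of $M_\alpha \llbracket X_1, \ldots, X_n \rrbracket$ rather than as $M \llbracket X_1, \ldots, X_n \rrbracket$. Everything else is bookkeeping with the directed union.
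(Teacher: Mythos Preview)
Your proof is correct and follows essentially the same route as the paper: both argue the containment $M_\alpha\llbracket X_1,\ldots,X_n\rrbracket\subseteq MS$ by using that $M_\alpha$ is finitely generated (since $R_\alpha$ is Artinian) to factor any power series with coefficients in $M_\alpha$ through a finite generating set, and both handle $MS\subseteq\bigcup_\alpha M_\alpha\llbracket X_1,\ldots,X_n\rrbracket$ by absorbing the finitely many $m_i$ and $f_i$ into a common index via directedness. Your closing remark on why the Noetherian hypothesis on $R_\alpha$ is essential is a nice addition not made explicit in the paper.
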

\begin{proof}
	Let $f \in MS$, and write $f = m_1 f_1 + \cdots + m_s f_s$, where $m_1, \ldots, m_s \in M$ and $f_1, \ldots, f_s \in S$. Since $R$ and $S$ are directed unions, $m_1, \ldots, m_s \in M_\alpha = M \cap R_\alpha$ for some $\alpha$, and $f_1, \ldots, f_s \in R_\beta \llbracket X_1, \ldots, X_n \rrbracket$ for some $\beta$. Then there exists $\gamma$ such that $m_1 f_1 + \cdots + m_s f_s \in M_\gamma \llbracket X_1, \ldots, X_n \rrbracket$, so we have $f \in \bigcup_\alpha (M_\alpha \llbracket X_1, \ldots, X_n \rrbracket)$.

	Next, let $g \in M_\alpha \llbracket X_1, \ldots, X_n \rrbracket$ for some $\alpha$. The coefficients of $g$ come from $M_\alpha$, which is finitely generated, so if $M_\alpha$ is generated by $m_1, \ldots, m_k$, then we can write $g = m_1 g_1 + \cdots + m_k g_k$ for some $g_1, \ldots, g_k \in R_\alpha \llbracket X_1, \ldots, X_n \rrbracket$. Hence,
	\[
		g \in M_\alpha (R_\alpha \llbracket X_1, \ldots, X_n \rrbracket) \subseteq M(R_\alpha \llbracket X_1, \ldots, X_n \rrbracket) \subseteq M \Big( \bigcup_\alpha \big( R_\alpha \llbracket X_1, \ldots, X_n \rrbracket \big) \Big) = M S.
	\] This establishes $MS = \bigcup_\alpha (M_\alpha \llbracket X_1, \ldots, X_n \rrbracket )$.
%
\end{proof}

The following result due to Magarian \cite{magarian} will be useful in our setting.

\begin{lemma}\label{tensorpower2} \cite[Theorem 3]{magarian}
	Let $L$ be an algebraic extension field of a field $k$. Let $R = k \llbracket X_1, \ldots, X_n \rrbracket$. Then $R \otimes_k L$ is a regular local ring of dimension $n$.
\end{lemma}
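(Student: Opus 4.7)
The plan is to write $L$ as a filtered colimit of its finite subextensions $L_i/k$. Since each $L_i$ is a finite-dimensional $k$-vector space, comparing coefficients in a $k$-basis of $L_i$ yields a natural isomorphism $R \otimes_k L_i \cong L_i \llbracket X_1, \ldots, X_n \rrbracket$. Since tensor product commutes with direct limits,
\[
A := R \otimes_k L \;\cong\; \bigcup_i L_i \llbracket X_1, \ldots, X_n \rrbracket,
\]
a directed union of $n$-dimensional Noetherian regular local rings with local transition maps. Any element of $A$ with nonzero constant term lies in some $L_i \llbracket X_1, \ldots, X_n \rrbracket$ as a unit and is therefore a unit in $A$, so $A$ is local with maximal ideal $\mathfrak{m} = (X_1, \ldots, X_n) A$.

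For the dimension, Lemma \ref{upperbound} gives $\dim A \le n$, while the chain $(0) \subsetneq (X_1) \subsetneq \cdots \subsetneq (X_1, \ldots, X_n)$ in $A$ consists of primes: each quotient $A/(X_1, \ldots, X_j) \cong \bigcup_i L_i \llbracket X_{j+1}, \ldots, X_n \rrbracket$ is a directed union of domains along injective transition maps, and hence itself a domain. So $\dim A = n$ and $\mathfrak{m}$ is generated by $n = \dim A$ elements.

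The main obstacle will be showing that $A$ is Noetherian, so that ``regular local ring'' carries its standard meaning. I would pass to the $\mathfrak{m}$-adic completion: a direct computation (analogous to Proposition \ref{minimalprimes}) gives $A/\mathfrak{m}^k \cong L[X_1, \ldots, X_n]/(X_1, \ldots, X_n)^k$, so $\hat A \cong L \llbracket X_1, \ldots, X_n \rrbracket$, a Noetherian regular local ring of dimension $n$. Each local inclusion $L_i \llbracket X_1, \ldots, X_n \rrbracket \hookrightarrow \hat A$ is a local homomorphism of regular local rings of equal dimension $n$ whose closed fiber is the field $L$, hence is flat by the miracle flatness theorem. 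Adapting the argument of Lemma \ref{directedunionflat} with $\hat A$ as a fixed module flat over each subring in the directed family yields flatness of $\hat A$ over $A$; since $\mathfrak{m}\hat A$ is the maximal ideal of $\hat A$, this upgrades to faithful flatness. Now Noetherianness descends: for any ideal $I \subseteq A$, choose finitely many $g_1, \ldots, g_s \in I$ whose images generate the Noetherian ideal $I\hat A$ (take generators of $I\hat A$ and expand them as $\hat A$-combinations of elements of $I$); faithful flatness then forces $I = I\hat A \cap A = (g_1, \ldots, g_s)A$. Hence $A$ is Noetherian, and combined with $\mathfrak{m}$ being generated by $n = \dim A$ elements, $A$ is a regular local ring of dimension $n$.
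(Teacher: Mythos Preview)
The paper does not supply its own proof of this lemma; it is quoted directly from Magarian \cite[Theorem 3]{magarian}, so there is no in-paper argument to compare against. Your self-contained proof is correct: the identification $R\otimes_k L\cong\bigcup_i L_i\llbracket X_1,\ldots,X_n\rrbracket$, the verification that this ring is local of dimension $n$ with maximal ideal $(X_1,\ldots,X_n)$, and the descent of Noetherianness from the completion $\hat A\cong L\llbracket X_1,\ldots,X_n\rrbracket$ via faithful flatness all go through as you describe. One small simplification is available in the flatness step: you do not need miracle flatness for $L_i\llbracket X_1,\ldots,X_n\rrbracket\hookrightarrow L\llbracket X_1,\ldots,X_n\rrbracket$, since $L\llbracket X_1,\ldots,X_n\rrbracket$ is the directed union of the rings $L_j\llbracket X_1,\ldots,X_n\rrbracket$ with $L_i\subseteq L_j\subseteq L$ finite, and each of these is free of rank $[L_j:L_i]$ over $L_i\llbracket X_1,\ldots,X_n\rrbracket$ (expand every coefficient in an $L_i$-basis of $L_j$), so the union is a direct limit of free modules and hence flat.
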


We can apply this lemma to a 0-dimensional pair $(R_0, R)$. We note that for $M \in \spec R$, the residue field $R/M$ is an algebraic field extension of $R_0 / (M \cap R_0)$. So $R/M$ is the directed union of the family of all finite algebraic extensions of $R_0 / (M \cap R_0)$. 

\begin{rem} \label{tensorequations}
Let $R$ be a 0-dimensional ring that is integral over an Artinian subring $R_0$, and write $\mathcal{F} = \mathcal{F}(R_0, R) = \{ R_\alpha\}$. If $M \in \spec R$, the residue field $R/M$ is a directed union of the family of fields $\{k_\alpha\}$, where $k_\alpha = R_\alpha / (M \cap R_\alpha) \cong (R_\alpha + M)/M$, and each $k_\alpha$ is finitely generated over $k_0 = R_0 / (M \cap R_0) \cong (R_0 + M)/M$ since $R_\alpha$ is finitely generated over $R_0$. Then, using the fact  that the direct limit commutes with the tensor product, we obtain
\begin{align*}
	k_0 \llbracket X_1, \ldots, X_n \rrbracket  \otimes_{k_0} (R/M) 
	 &\cong k_0 \llbracket X_1, \ldots, X_n \rrbracket \otimes_{k_0} \bigcup_\alpha (R_\alpha + M)/M\\
	 &\cong  k_0 \llbracket X_1, \ldots, X_n \rrbracket \otimes_{k_0} \varinjlim (R_\alpha + M)/M \\
	 &\cong  k_0 \llbracket X_1, \ldots, X_n \rrbracket \otimes_{k_0} \varinjlim k_\alpha \\
	 &\cong \varinjlim \big( k_0 \llbracket X_1, \ldots, X_n \rrbracket \otimes_{k_0} k_\alpha) \\
	 &\cong \varinjlim (k_\alpha \llbracket X_1, \ldots, X_n \rrbracket )
	 = \bigcup_\alpha (k_\alpha \llbracket X_1, \ldots, X_n \rrbracket).
\end{align*}
\end{rem}

Combining Remark \ref{tensorequations} with Lemma \ref{tensorpower2}, we get the following dimension result.

\begin{thm} \label{regular}
	Let $R$ be a 0-dimensional ring that is integral over an Artinian ring $R_0$. Let $\mathcal{F} = \mathcal{F}(R_0, R) = \{R_\alpha\}$. Then
	\[
		\dim  \mathcal{F}\llbracket X_1, \ldots, X_n \rrbracket = n.
	\]
	For each $M \in \spec R$, $M \mathcal{F}\llbracket X_1, \ldots, X_n \rrbracket$ is a minimal prime ideal of $\mathcal{F}\llbracket X_1, \ldots, X_n \rrbracket$, and the quotient of $\mathcal{F}\llbracket X_1, \ldots, X_n \rrbracket$ by $M \mathcal{F}\llbracket X_1, \ldots, X_n \rrbracket$ is a regular local ring of dimension $n$. 
\end{thm}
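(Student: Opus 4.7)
Write $S = \mathcal{F}\llbracket X_1,\ldots,X_n\rrbracket$ and $S_\alpha = R_\alpha\llbracket X_1,\ldots,X_n\rrbracket$, so that $S = \bigcup_\alpha S_\alpha$. Each $R_\alpha$ is Artinian, hence Noetherian of Krull dimension $0$, so each $S_\alpha$ is a Noetherian ring of Krull dimension $n$. The upper bound $\dim S \le n$ is then immediate from Lemma \ref{upperbound}.

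For the structure of the quotient, fix $M \in \spec R$ and set $M_\alpha = M \cap R_\alpha$. The ring homomorphism $\phi$ of Lemma \ref{a} has image $\bigcup_\alpha k_\alpha\llbracket X_1,\ldots,X_n\rrbracket$ and kernel $\bigcup_\alpha M_\alpha\llbracket X_1,\ldots,X_n\rrbracket$, which Proposition \ref{minimalprimes} identifies with $MS$. Hence
\[
S/MS \;\cong\; \bigcup_\alpha k_\alpha\llbracket X_1,\ldots,X_n\rrbracket.
\]
Remark \ref{tensorequations} rewrites this as $k_0\llbracket X_1,\ldots,X_n\rrbracket \otimes_{k_0}(R/M)$, which by Lemma \ref{tensorpower2} is a regular local ring of dimension $n$. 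In particular $MS$ is prime, $S/MS$ is regular local of dimension $n$, and $\dim S \ge \dim(S/MS) = n$, forcing $\dim S = n$.

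It remains to show $MS$ is \emph{minimal}. I plan to transfer minimality slice-by-slice from each Noetherian $S_\alpha$ via the contraction identity $MS \cap S_\alpha = M_\alpha S_\alpha = M_\alpha\llbracket X_1,\ldots,X_n\rrbracket$: containment $\supseteq$ is clear, and for $\subseteq$, any $f \in MS \cap S_\alpha$ lies (by Proposition \ref{minimalprimes}) in $M_\beta\llbracket X_1,\ldots,X_n\rrbracket$ for some index $\beta$ which may be chosen above $\alpha$, so its coefficients lie in $M_\beta \cap R_\alpha = M \cap R_\alpha = M_\alpha$. Since $R_\alpha$ is Artinian, $M_\alpha$ is a minimal prime of $R_\alpha$, and the classical Noetherian fact that minimal primes of $A\llbracket X_1,\ldots,X_n\rrbracket$ are extensions of minimal primes of $A$ makes $M_\alpha S_\alpha$ a minimal prime of $S_\alpha$. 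If $Q \subseteq MS$ is any prime of $S$, then $Q \cap S_\alpha$ is a prime of $S_\alpha$ contained in $M_\alpha S_\alpha$, and minimality forces $Q \cap S_\alpha = M_\alpha S_\alpha$ for every $\alpha$; the directed union then yields $Q = \bigcup_\alpha M_\alpha S_\alpha = MS$. This last step is where I expect the real work: the minimality of $MS$ is what genuinely uses the directed-union structure of $S$ against the Noetherian structure of each $S_\alpha$, whereas the upper bound, the identification of $S/MS$, the primality, and the dimension count are essentially bookkeeping once Lemmas \ref{upperbound}, \ref{a}, \ref{tensorpower2}, Proposition \ref{minimalprimes}, and Remark \ref{tensorequations} are in hand.
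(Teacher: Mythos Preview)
Your proof is correct, and the bulk of it---the upper bound via Lemma \ref{upperbound}, the identification of $S/MS$ via Lemma \ref{a}, Proposition \ref{minimalprimes}, Remark \ref{tensorequations}, and Lemma \ref{tensorpower2}, and the resulting dimension equality---matches the paper's argument exactly. The only divergence is in how you establish minimality of $MS$, and here you work harder than necessary. Your slice-by-slice contraction argument is valid, but the paper dispatches minimality in one line from facts already in hand: since $S/MS$ is a regular local ring it is a domain \cite[Theorem 164]{kaplansky}, so $MS$ is prime; and since $\dim(S/MS) = n = \dim S$, any prime strictly below $MS$ would produce a chain of primes in $S$ of length exceeding $n$, contradicting $\dim S \le n$. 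So contrary to your expectation, minimality is not ``the real work'' but a free consequence of the dimension equality you had just established. Your contraction identity $MS \cap S_\alpha = M_\alpha S_\alpha$ is a pleasant observation in its own right, but it is not needed for this theorem.
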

\begin{proof}
	Put $S = \mathcal{F}\llbracket X_1, \ldots, X_n \rrbracket$. By Lemma \ref{a}, for $k_\alpha \cong (R_\alpha + M) / M$, we have
	\[
		S / MS  \, \cong\,  \bigcup_\alpha (k_\alpha \llbracket X_1, \ldots, X_n \rrbracket),
	\]
	and by Remark \ref{tensorequations}, we have
	\[
		\bigcup_\alpha \big(k_\alpha \llbracket X_1, \ldots, X_n \rrbracket\big) \cong k_0 \llbracket X_1, \ldots, X_n \rrbracket \otimes_{k_0} (R/M).
	\]
	By Lemma \ref{tensorpower2}, this ring is a regular local ring of dimension $n$, which implies $S$ has dimension at least $n$ by Lemma \ref{a}, i.e. $\dim S \geq n$. We also have $\dim S \leq n$ by Lemma \ref{upperbound}. Combining the two inequalities, we obtain $\dim S = n$. Since $S$ has a homomorphic image that is a regular local ring of dimension $n$, hence a domain \cite[Theorem 164]{kaplansky}, with kernel the extension of $M$ in $S$, we see that $MS$ is a minimal prime ideal.
\end{proof}

We can now summarize flatness and dimension properties in the following table.

\newcolumntype{u}{>{\columncolor[RGB]{225,225,255}} m{2.1cm}}
\newcolumntype{s}{>{\columncolor[RGB]{225,225,255}} m{4.0cm}}
\newcolumntype{v}{>{\columncolor[RGB]{255,225,235}} m{4.2cm}}
 \newcolumntype{w}{>{\columncolor[RGB]{255,200,200}} m{2.8cm}}
  \newcolumntype{q}{m{2.0cm}}
\newcolumntype{x}{>{\columncolor[RGB]{255,225,235}} m{5.0cm}}

\begin{table}[!h]
\begin{center}
\begin{tabular}{ q| s| v | w | } 

\cline{2-4}
 & \multicolumn{1}{c|}{\textbf{Noetherian Ring} $R$}
           & \multicolumn{2}{c|}{\textbf{Non-Noetherian Ring} $R$} \\
       & \centering $ R \llbracket X_1, \ldots, X_n \rrbracket$ 
         &  \centering $ R \llbracket X_1, \ldots, X_n \rrbracket $ 
         & \centering $\mathcal{F}\llbracket X_1, \ldots, X_n \rrbracket$  \tabularnewline
      \cline{2-4}
        \centering  {  Dimension } &  \centering $n$ \cellcolor[RGB]{240,240,255}  &\quad \ \centering $n$ if $R$ is \emph{SFT} \newline {\small ($\infty$ otherwise) }\cellcolor[RGB]{255,240,248}  &  \centering $n$ \cellcolor[RGB]{255,240,240}  \tabularnewline \hline
        \centering  {  Flatness } & \centering $\quad$ Faithfully Flat $\quad$ over $R$ &  \centering  { Faithfully Flat over $R$} \newline  \hspace{0.5in} { if $R$ is \emph{coherent}} \hspace{-0.28in} \vspace{-0.03in} \newline  { \scriptsize (may not be flat otherwise) }  &  \centering Faithfully Flat over $R$  \tabularnewline
\end{tabular}
\end{center}
\caption{Dimension and flatness of power series rings over a 0-dimensional ring $R$ that is integral over an Artinian subring $R_0$.}
\end{table}


\section{Prime Spectrum and Heights of Finitely Generated Ideals}

Theorem \ref{regular} describes some of the prime ideals of $\mathcal{F}\llbracket X_1, \ldots, X_n \rrbracket$. We continue investigating the prime spectrum of $\mathcal{F}\llbracket X_1, \ldots, X_n \rrbracket$ in this section. This will lead to a result about the relationship between the height of a finitely generated ideal and the number of its generators. We begin with a complete characterization of the minimal prime ideals of $\mathcal{F}\llbracket X_1, \ldots, X_n \rrbracket$.

\begin{lemma}\label{minlocal}
	Let $R$ be a 0-dimensional ring that is integral over an Artinian ring $R_0$. Write $\mathcal{F} = \mathcal{F}(R_0, R) =  \{R_\alpha\}$. Then the set of minimal prime ideals of $S = \mathcal{F}\llbracket X_1, \ldots, X_n \rrbracket$ consists of extensions of the prime ideals of $R$ to $S$, i.e.
	\[
		\text{Minimal prime ideals of } S =  \{ M S\ | \ M \in \spec R \}.
	\] 
\end{lemma}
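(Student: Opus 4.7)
The containment $\{MS : M \in \spec R\} \subseteq \text{Min}(S)$ is immediate from Theorem \ref{regular}, so the plan is to establish the reverse containment: every minimal prime of $S$ arises as the extension of some prime of $R$.

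The approach is entirely based on contraction. Let $Q$ be a minimal prime ideal of $S$, and set $M = Q \cap R$. Since $Q$ is prime in $S$ and $R \subseteq S$, the contraction $M$ is a prime ideal of $R$, so $M \in \spec R$. Clearly $M \subseteq Q$, hence $MS \subseteq Q$.

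By Theorem \ref{regular}, $MS$ is itself a (minimal) prime ideal of $S$. So we have two prime ideals of $S$ with $MS \subseteq Q$, and $Q$ is assumed to be minimal. The minimality of $Q$ forces $MS = Q$, which exhibits $Q$ as the extension of a prime of $R$.

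Combining both containments yields the equality
\[
	\text{Min}(S) = \{ MS \mid M \in \spec R \},
\]
as claimed. There is no genuine obstacle here once Theorem \ref{regular} is in hand: the only subtlety to flag is that $M = Q\cap R$ is automatically a prime of $R$ (no maximality or going-up argument is needed), and that $MS$ being prime is precisely the content of Theorem \ref{regular} that makes the minimality comparison legal.
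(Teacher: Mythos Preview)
Your proof is correct and essentially identical to the paper's own argument: both directions use Theorem~\ref{regular} for the forward containment, and for the reverse containment contract a minimal prime $Q$ to $M = Q \cap R$, observe $MS \subseteq Q$ with $MS$ prime, and invoke minimality of $Q$ to conclude $MS = Q$.
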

\begin{proof}
	By Theorem \ref{regular}, for any $M \in \spec R$, the extension of $M$ is a minimal prime ideal of $S$.

	Next, let $P$ be a minimal prime ideal of $S$. Then $P \cap R \in \spec R$, so $P \cap R = M$ for some $M \in \spec R$. Then $M \subseteq P$, so we have $M S \subseteq P$. By the minimality of $P$, we have $P = M S$.
\end{proof}

%

Recall that a ring is catenary if, for any pair of prime ideals $P \subseteq Q$, all saturated chains of primes ideals between $P$ and $Q$ have the same length.
\begin{prop}\label{catenary}
	Let $R$ be a 0-dimensional ring that is integral over an Artinian ring $R_0$. Let $\mathcal{F} = \mathcal{F}(R_0, R) =  \{R_\alpha\}$. Then $\mathcal{F} \llbracket X_1, \ldots, X_n \rrbracket$ is catenary. 
\end{prop}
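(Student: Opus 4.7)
The plan is to reduce the catenary property of $S = \mathcal{F}\llbracket X_1, \ldots, X_n \rrbracket$ to the catenary property of each quotient $S/MS$ where $M \in \spec R$, using the characterization of minimal primes from Lemma \ref{minlocal} together with the regular-local-ring structure from Theorem \ref{regular}.

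Given prime ideals $P \subseteq Q$ of $S$, I would first select a minimal prime $\mathfrak{p}$ of $S$ contained in $P$ (which exists by a standard Zorn's lemma argument, since an intersection of a chain of prime ideals is prime). By Lemma \ref{minlocal}, $\mathfrak{p} = MS$ for some $M \in \spec R$. Theorem \ref{regular} then tells us that $S/MS$ is a regular local ring of dimension $n$, and in particular is Noetherian and Cohen--Macaulay, hence catenary.

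Next, I would note the standard order-preserving bijection between primes of $S$ containing $MS$ and primes of $S/MS$; under this correspondence, saturated chains between $P$ and $Q$ in $S$ correspond in a length-preserving way to saturated chains between $P/MS$ and $Q/MS$ in $S/MS$. Since $S/MS$ is catenary, all such chains in $S/MS$ have the same length, and transporting this back yields that any two saturated chains between $P$ and $Q$ in $S$ have the same length. Hence $S$ is catenary.

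I do not anticipate any real obstacle, as both Lemma \ref{minlocal} and Theorem \ref{regular} do all the heavy lifting. The only subtle point worth being explicit about is that the chosen minimal prime $\mathfrak{p}$ may not be unique below $P$, but this is irrelevant: we only need one minimal prime of $S$ under $P$ to push the entire chain down into a Noetherian catenary quotient, and the length of saturated chains between $P$ and $Q$ is intrinsic, independent of which minimal prime we used.
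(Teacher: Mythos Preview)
Your proposal is correct and follows essentially the same route as the paper: pick a minimal prime $MS$ below $P$ via Lemma~\ref{minlocal}, pass to the regular local quotient $S/MS$ given by Theorem~\ref{regular}, and use that regular local rings are catenary to conclude. The paper's proof is slightly terser (citing Matsumura for catenarity of regular local rings rather than routing through Cohen--Macaulay), but the argument is identical in substance.
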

\begin{proof}
	Put $S = \mathcal{F} \llbracket X_1, \ldots, X_n \rrbracket$. Let $P \subseteq Q$ be prime ideals in $S$. Let $P_0$ be a minimal prime ideal of $S$ contained in $P$. By Proposition \ref{minlocal}, we can write $P_0 = MS$ for some $M \in \spec R$. Consider the quotient $S / MS = \overline{S}$, which is a regular local ring by Theorem \ref{regular}, so $\overline{S}$ is catenary \cite[Theorem 17.4 (ii)]{matsumura}. Hence, every saturated chain of prime ideals between the images of $P$ and $Q$ in $\overline{S}$ has the same length, and so this is also true between $P$ and $Q$ inside $S$, meaning $S$ is catenary.
\end{proof}

Every Noetherian ring satisfies Krull's Height Theorem, i.e. an ideal that is generated by $t$ elements has height at most $t$. We show next that Krull's Height Theorem holds for finitely generated ideals in $\mathcal{F}\llbracket X_1, \ldots, X_n \rrbracket$. It will be convenient to introduce the notion of height-generated ideals.

\begin{de}
	A finitely generated ideal $I$ of a ring is said to be \emph{height-generated} if it can be generated by $\Ht(I)$-many elements.
\end{de}
 
\begin{prop} \label{krull}
	Let $R$ be a 0-dimensional ring that is integral over an Artinian ring $R_0$, and write $\mathcal{F} = \mathcal{F}(R_0, R) =  \{R_\alpha\}$. Let $(f_1, \ldots, f_k)$ be a finitely generated ideal in $\mathcal{F}\llbracket X_1, \ldots, X_n \rrbracket$. Then $\text{ht}(f_1, \ldots, f_k) \leq k$. 
\end{prop}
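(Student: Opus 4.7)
The plan is to reduce Krull's Height Theorem for $S = \mathcal{F}\llbracket X_1, \ldots, X_n \rrbracket$ to its classical Noetherian form by passing to quotients by the minimal primes of $S$. Since $\Ht(I)$ is the infimum of $\Ht(P)$ over primes $P \supseteq I$, and this infimum is attained among minimal primes over $I$, it suffices to fix an arbitrary minimal prime $P$ over $I = (f_1, \ldots, f_k)$ and show $\Ht(P) \le k$.

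To estimate $\Ht(P)$, I would use the standard observation that any finite chain of primes ending at $P$ can be extended downward (via Zorn) to begin at a minimal prime of $S$, so
\[
  \Ht(P) \;=\; \sup\{\Ht(P/Q) : Q \text{ is a minimal prime of } S \text{ with } Q \subseteq P\}.
\]
Fix any such $Q$. By Lemma \ref{minlocal}, $Q = MS$ for some $M \in \spec R$, and by Theorem \ref{regular}, $S/Q$ is a regular local ring of dimension $n$, hence Noetherian. Since $P$ is minimal over $I$ and $Q \subseteq P$, any prime $P' \subseteq P$ containing $I + Q$ must contain $I$ and therefore equal $P$; so $P/Q$ is a minimal prime over the ideal of $S/Q$ generated by the $k$ residue classes $\bar f_1, \ldots, \bar f_k$. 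The classical Krull Height Theorem applied inside $S/Q$ then yields $\Ht(P/Q) \le k$, and taking the supremum over $Q$ gives $\Ht(P) \le k$.

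The main obstacle is not computational but structural: $S$ is non-Noetherian and may possess infinitely many minimal primes, so height in $S$ is a priori delicate and cannot be bounded directly by a prime avoidance / induction argument in $S$ itself. The essential input---supplied by Lemma \ref{minlocal} together with Theorem \ref{regular}---is that the minimal primes of $S$ are precisely the extensions $MS$ of primes of $R$ and that each quotient $S/MS$ is a Noetherian regular local ring of dimension $n$. This is what allows a single uniform application of the classical Krull Height Theorem in each quotient to cover all cases simultaneously.
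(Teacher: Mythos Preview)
Your argument is correct and rests on the same key idea as the paper's proof: pass to the quotient $S/MS$ for $M \in \spec R$, which is a Noetherian regular local ring by Theorem~\ref{regular}, and apply the classical Krull Height Theorem there. The only difference is organizational: the paper simply picks one $M$, finds a prime $\overline{P}$ of height $\le k$ over $(\overline{f_1},\ldots,\overline{f_k})$ in $S/MS$, and lifts it to a prime $P \supseteq I$ in $S$ with $\Ht(P) \le k$ (using that $MS$ is minimal), which already gives $\Ht(I) \le k$; you instead fix a minimal prime $P$ over $I$, invoke Lemma~\ref{minlocal} to range over all minimal $Q = MS \subseteq P$, and bound $\Ht(P/Q)$ for each, establishing the slightly stronger fact that \emph{every} minimal prime over $I$ has height $\le k$. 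Your route is a touch longer but perfectly sound; the paper's is shorter because exhibiting a single witness suffices for the stated inequality.
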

\begin{proof}
	Put $S = \mathcal{F}\llbracket X_1, \ldots, X_n \rrbracket$, and let $M \in \spec R$. The image of the ideal 
	\[
		(f_1, \ldots, f_k, M) = (f_1, \ldots, f_k)S + M S
	\]
	in the quotient $\overline{S} = S / MS$ 
	is generated by the images of $f_1, \ldots, f_k$.  Since $\overline{S}$ is a regular local ring by Theorem \ref{regular}, Krull's Height Theorem holds in $\overline{S}$, so that $\Ht(\overline{f_1}, \ldots, \overline{f_k}) \leq k$, where $\overline{f_i}$ is the image of $f_i$ in $\overline{S}$. Hence, there is a prime ideal $P \subseteq S$ containing $MS$ such that $(\overline{f_1}, \ldots, \overline{f_k}) \subseteq \overline{P}$ and $\Ht(\overline{P}) \leq k$ in $\overline{S}$. Since $MS$ is minimal in $S$ and $MS \subseteq P$, we also have $\Ht(P) \leq k$ in $S$. Moreover, $(f_1, \ldots, f_k) \subseteq P$ in $S$, so we obtain $\text{ht}(f_1, \ldots, f_k) \leq k$, as we wanted to show.
\end{proof}


\begin{cor}
	Let $R$ be a 0-dimensional ring that is integral over an Artinian ring $R_0$. Let $\mathcal{F} = \mathcal{F}(R_0, R) =  \{R_\alpha\}$. Let $J$ be a height-generated ideal of $\mathcal{F} \llbracket X_1, \ldots, X_n \rrbracket$, and let $I/J$ be an ideal of $\mathcal{F}\llbracket X_1, \ldots, X_n \rrbracket / J$ generated by $t$ elements. Then the height of $I/J$ in the ring $\mathcal{F} \llbracket X_1, \ldots, X_n \rrbracket/J$ is at most $t$.
\end{cor}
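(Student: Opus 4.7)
The plan is to combine Proposition~\ref{krull} with a simple concatenation of chains of prime ideals, with no further machinery needed. Put $S = \mathcal{F}\llbracket X_1, \ldots, X_n \rrbracket$, write $J = (g_1, \ldots, g_h)$ with $h = \Ht(J)$, and lift generators of $I/J$ to elements $f_1, \ldots, f_t \in S$. Then $I = (g_1, \ldots, g_h, f_1, \ldots, f_t)$ is an ideal of $S$ on $h + t$ generators, so Proposition~\ref{krull} gives $\Ht(I) \le h + t$, and in particular $\Ht(P) \le h + t$ for every prime $P$ of $S$ that is minimal over $I$.

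Fix such a $P$. For any minimal prime $Q$ of $J$ contained in $P$, the definition of $\Ht(J)$ as the infimum of $\Ht(P')$ over primes $P' \supseteq J$ gives $\Ht(Q) \ge \Ht(J) = h$; this is the one place where the height-generated hypothesis is used. Since $\Ht(P) \le h + t < \infty$, both $\Ht(Q)$ and $\Ht(P/Q)$ are finite and realized by actual chains of primes in $S$. Concatenating a chain of length $\Ht(Q)$ from a minimal prime of $S$ up to $Q$ with a chain of length $\Ht(P/Q)$ from $Q$ up to $P$ produces a chain of length $\Ht(Q) + \Ht(P/Q)$ ending at $P$, so $\Ht(P) \ge h + \Ht(P/Q)$ and therefore $\Ht(P/Q) \le \Ht(P) - h \le t$.

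To finish, note that any strictly increasing chain $P_0/J \subsetneq \cdots \subsetneq P_n/J = P/J$ in $S/J$ lifts to a chain $P_0 \subsetneq \cdots \subsetneq P_n = P$ in $S$ with $J \subseteq P_0$. Prepending a minimal prime $Q$ of $J$ contained in $P_0$ (and dropping the added term if $Q = P_0$) shows $n \le \Ht(P/Q)$ for such a $Q$, so $\Ht(P/J) \le \max_Q \Ht(P/Q) \le t$. Since this bound holds for every prime $P$ of $S$ minimal over $I$, and $\Ht(I/J)$ is the infimum of $\Ht(P/J)$ over primes of $S/J$ containing $I/J$, we conclude $\Ht(I/J) \le t$.

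The main obstacle is conceptual rather than technical: one must recognize that the inequality $\Ht(Q) \ge h$ for every minimal prime $Q$ of a height-generated ideal $J$ is a direct consequence of the definition of $\Ht(J)$, so nothing more than Proposition~\ref{krull} and elementary chain manipulations are required, even though $S$ is typically non-Noetherian.
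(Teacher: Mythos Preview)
Your proof is correct and tracks the paper's argument closely: write $I$ on $h+t$ generators, invoke Proposition~\ref{krull}, pass to a minimal prime $Q$ of $J$ sitting below a minimal prime $P$ of $I$, and bound $\Ht(P/J)$. The one substantive difference is that the paper uses catenarity (Proposition~\ref{catenary}) to get the exact equality $\Ht(P/Q) = \Ht_S(P) - \Ht_S(Q)$, whereas you use only the universally valid inequality $\Ht_S(P) \ge \Ht_S(Q) + \Ht(P/Q)$ coming from chain concatenation. Since only the inequality is needed, your route is slightly more elementary: it never appeals to the catenarity of $S$.

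One small wrinkle worth tightening: you assert that ``$\Ht(P) \le h+t$ for every prime $P$ of $S$ that is minimal over $I$'' follows from Proposition~\ref{krull}, but as stated that proposition only bounds $\Ht(I)$, i.e.\ the infimum over minimal primes. (The paper's proof makes exactly the same leap when it writes $\Ht_S(P) \le k+t$.) The stronger per-prime bound does hold in $S$---any minimal prime of $I$ contains some $MS$, and one applies the Noetherian Krull theorem in the regular local ring $S/MS$---but in fact your argument does not need it. Since $\Ht(I/J)$ is an infimum, it is enough to exhibit a single prime $P \supseteq I$ with $\Ht(P/J) \le t$; for that you may simply choose $P$ with $\Ht(P) = \Ht(I) \le h+t$ and run the rest of your argument for this one $P$ and all minimal primes $Q$ of $J$ below it. With that adjustment (or the observation above justifying the per-prime bound), the proof is complete.
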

\begin{proof}
	Put $S = \mathcal{F} \llbracket X_1, \ldots, X_n \rrbracket$. Let $P/J$ be a minimal prime divisor of $I/J$. We show that $\Ht(P/J) \leq t$. Put $k = \Ht_{S}(J)$, and since $J$ is a height-generated ideal of $S$, $J$ can be generated by $k$ elements. This means the ideal $I \subseteq S$ can be generated by $k + t$ elements.

	Since $P/J$ is a minimal prime divisor of $I/J$ in $S/J$, $P$ is a minimal prime divisor of $I$ in $S$. And since $J \subseteq I \subseteq P$ in $S$, we see that $P$ contains a minimal prime divisor $Q$ of $J$, where $\Ht_{S}(Q) = k$ since $J$ is height-generated. Since $I$ can be generated by $k + t$ elements, by Proposition \ref{krull} applied to $I \subseteq S$, we have $\Ht_{S}(P) \leq k + t$. Since $S$ is catenary by Proposition \ref{catenary}, we have $\Ht(P/Q) = \Ht_{S}(P) - \Ht_{S}(Q) \leq k + t - k = t$. Then $\Ht(P/J) \leq \Ht(P/Q) + \Ht(Q/J) \leq t + 0 = t$. So $\Ht(P/J) \leq t$, which is what we wanted to show.
\end{proof}



\begin{cor} \label{regseqheightgenart}
	Let $R$ be a 0-dimensional ring that is integral over an Artinian ring $R_0$. Let $\mathcal{F} = \mathcal{F}(R_0, R) =  \{R_\alpha\}$. Let $f_1, \ldots, f_t$ be a regular sequence in $\mathcal{F}\llbracket X_1, \ldots, X_n \rrbracket$. Then for $I = (f_1, \ldots, f_t)$, we have $\Ht(I) = t$, so $I$ is a height-generated ideal.
\end{cor}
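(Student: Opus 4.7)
The plan is to prove the claim by induction on $t$, with Proposition \ref{krull} supplying the upper bound $\Ht(I) \leq t$ and the induction giving the matching lower bound; the height-generated conclusion is then immediate from $I = (f_1, \ldots, f_t)$ together with $\Ht(I) = t$. The one auxiliary fact I will need, which holds in any commutative ring, is that a non-zero-divisor lies outside every minimal prime: if $P$ is a minimal prime of $S$ then $S_P$ has a unique prime ideal, which is therefore nil, so any $x \in P$ satisfies $s x^n = 0$ for some $s \notin P$, and if $x$ is a non-zero-divisor then so is $x^n$, forcing $s = 0 \in P$, a contradiction.

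For the base case $t = 1$, the non-zero-divisor $f_1$ lies outside every minimal prime of $S$, so every prime over $(f_1)$ strictly contains a minimal prime and $\Ht((f_1)) \geq 1$. For the inductive step, set $J = (f_1, \ldots, f_{t-1})$; since $f_1, \ldots, f_{t-1}$ is itself a regular sequence, the inductive hypothesis gives $\Ht(J) = t - 1$, and consequently every prime over $J$ has height at least $t-1$. Let $P$ be any minimal prime of $S$ containing $I$; a Zorn's-lemma argument inside the poset of primes lying between $J$ and $P$ produces a minimal prime $Q$ over $J$ with $Q \subseteq P$, and hence $\Ht(Q) \geq t - 1$. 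The regular-sequence hypothesis makes $f_t$ a non-zero-divisor modulo $J$, so applying the auxiliary fact to $S/J$ gives $f_t \notin Q$. But $f_t \in I \subseteq P$, so $Q \subsetneq P$, whence $\Ht(P) \geq \Ht(Q) + 1 \geq t$. Since $P$ was an arbitrary minimal prime over $I$, we conclude $\Ht(I) \geq t$, finishing the induction.

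The main delicate point to watch is whether each ingredient transfers from the Noetherian setting: the minimal-prime avoidance of non-zero-divisors is purely formal and requires no finiteness hypothesis on $S$, and the role of Krull's Height Theorem is played by Proposition \ref{krull}, which was proved precisely for $\mathcal{F}\llbracket X_1, \ldots, X_n \rrbracket$, so no further non-Noetherian subtleties should arise.
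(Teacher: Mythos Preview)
Your proof is correct and follows essentially the same strategy as the paper: the upper bound $\Ht(I) \leq t$ comes from Proposition~\ref{krull}, and the lower bound $\Ht(I) \geq t$ is the general fact that an ideal generated by a regular sequence of length $t$ has height at least $t$. The paper simply cites this as \cite[Theorem~132]{kaplansky}, whereas you supply a self-contained inductive proof of that theorem; both routes are valid and arrive at the same conclusion.
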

\begin{proof}
	By Proposition \ref{krull}, we have $\Ht(I) \leq t$. Since $I$ is generated by a regular sequence, we also have $\Ht(I) \geq t$ \cite[Theorem 132]{kaplansky}. This means we have $\Ht(I) = \Ht(f_1, \ldots, f_t) = t$, and so $I$ is a height-generated ideal.
\end{proof}

We show next that if $R$ is semilocal, then the prime spectrum of $\mathcal{F}\llbracket X_1, \ldots, X_n \rrbracket$ is Noetherian. Recall that any 0-dimensional semilocal ring $R$ is integral over some Artinian subring $R_0$, by Proposition \ref{gilmerheinzer}. 

\begin{prop} \label{noethspectrum}
	Let $R$ be a 0-dimensional semilocal ring, integral over an Artinian ring $R_0$. Let $\mathcal{F} = \mathcal{F}(R_0, R) =  \{R_\alpha\}$. Then $\mathcal{F} \llbracket X_1, \ldots, X_n \rrbracket$ has Noetherian prime spectrum.
\end{prop}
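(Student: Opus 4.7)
The plan is to exploit the fact that $R$ being semilocal makes $\spec R$ finite, combined with the very strong structural information from Theorem \ref{regular} that quotients by minimal primes are regular local rings.

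First I would record that since $R$ is 0-dimensional and semilocal, $\spec R = \{M_1, \ldots, M_k\}$ is a finite set (every prime is maximal). Then by Lemma \ref{minlocal}, the set of minimal prime ideals of $S = \mathcal{F}\llbracket X_1, \ldots, X_n \rrbracket$ is exactly $\{M_1 S, \ldots, M_k S\}$, which is finite.

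Next, every prime of $S$ contains at least one minimal prime, which gives the decomposition
\[
\spec S \;=\; \bigcup_{i=1}^k V(M_i S).
\]
Each closed piece $V(M_i S)$ is homeomorphic to $\spec(S / M_i S)$. By Theorem \ref{regular}, $S / M_i S$ is a regular local ring of dimension $n$, in particular Noetherian, so $V(M_i S)$ is a Noetherian topological space.

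Finally I would verify that a finite union of Noetherian closed subspaces is Noetherian: given a descending chain $Z_1 \supseteq Z_2 \supseteq \cdots$ of closed subsets of $\spec S$, for each fixed $i$ the chain $Z_j \cap V(M_i S)$ stabilizes inside the Noetherian space $V(M_i S)$, and since there are only finitely many $i$'s we can take the maximum of the stabilization indices; beyond that point $Z_j = \bigcup_i (Z_j \cap V(M_i S))$ is constant. Hence $\spec S$ is Noetherian. The main point to get right is really just the bookkeeping with the finite union; nothing deep beyond Lemma \ref{minlocal}, Theorem \ref{regular}, and the semilocality hypothesis is needed.
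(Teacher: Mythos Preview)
Your proof is correct and follows essentially the same strategy as the paper: use semilocality to get finitely many minimal primes $M_iS$ via Lemma \ref{minlocal}, invoke Theorem \ref{regular} to see each $S/M_iS$ is Noetherian, and conclude that $\spec S$ is Noetherian as a finite assembly of Noetherian pieces. The only difference is in the last step: the paper observes that the $M_iS$ are pairwise comaximal and applies the Chinese Remainder Theorem to identify $S/\sqrt{(0)}$ with the product $\prod_i S/M_iS$, obtaining $\spec S$ as a \emph{disjoint} union of Noetherian spectra, whereas you cover $\spec S$ by the closed sets $V(M_iS)$ and argue directly that a finite union of Noetherian closed subspaces is Noetherian. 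Your route is slightly more economical since it sidesteps the comaximality check and CRT; the paper's route gives the extra information that $\spec S$ actually splits as a disjoint union.
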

\begin{proof}
	Put $S = \mathcal{F} \llbracket X_1, \ldots, X_n \rrbracket$. Since $R$ is semilocal, $\spec R = \{M_1, \ldots, M_k\}$ is finite. By Lemma \ref{minlocal}, the set of minimal prime ideals of $S$ is
	\[
		 \big\{ M_j \cdot S 
		\ \big| \ j = 1, \ldots, k \big\}.
	\]
	For $s \neq t$, since $M_s$ and $M_t$ are maximal in $R$, we have $1 \in M_s + M_t$. Then the extensions of $M_s$ and $M_t$ in $S$ are coprime. We then have
	\[
		S / \sqrt{(0)} = S / (M_1 S) \cdots (M_k S) \cong \big( S / M_1 S\big)  \times \cdots \times \big( S/ M_k S \big).
	\]
	Now, $\spec S \approx \spec (S/\sqrt{(0)})$, and for a finite direct product of rings $A_1 \times \cdots \times A_k$, we have 
	\[
		\spec (A_1 \times \cdots \times A_k) = \{ A_1 \times \cdots \times P_i \times \cdots \times A_k \ | \ P_i \in \spec (A_i) \}.
	\]
	Since the prime spectrum of a finite direct product of rings is a finite disjoint union of the prime spectra, we obtain
	\[
		\spec S \approx \spec  \big( S / M_1 S\big)  \sqcup \cdots \sqcup \spec \big( S/ M_k S \big).
	\]
	Each $S/M_j S$ is a (Noetherian) regular local ring by Theorem \ref{regular}, so each $S/M_j S$ has Noetherian prime spectrum. As a finite disjoint union of Noetherian prime spectra, $S = \mathcal{F} \llbracket X_1, \ldots, X_n \rrbracket$ also has Noetherian prime spectrum.
\end{proof}


\section{Non-Noetherian Cohen-Macaulayness}

In this section, we examine properties of $\mathcal{F}\llbracket X_1, \ldots, X_n \rrbracket$ that resemble those seen in Cohen-Macaulay rings. A defining feature of Noetherian Cohen-Macaulay rings is that the length of a maximal regular sequence in a proper ideal $I$ equals the height of $I$, i.e. the ``{grade = height}" property. Following \cite{at}, let the {classical grade} of an ideal $I$ denote the supremum of the lengths of regular sequences in $I$, i.e. 
\[
	\cgrade_R(I) = \sup\{ i \, | \, f_1, \ldots, f_i \text{ is a regular sequence in } I \}.
\]
In general, we have $\cgrade_R(I) \leq \Ht(I)$. If $R$ is a Cohen-Macaulay ring, then $\cgrade_R(I) = \Ht(I)$.

For a Noetherian ring $R$, every maximal regular sequence in an ideal $I$ has the same length \cite[Theorem 121]{kaplansky}. The classical grade in a Noetherian ring also has a homological characterization \cite[Theorem 16.7]{matsumura}, given by
\[
		\cgrade_R(I) =  \inf\{ \, i \,  |  \, \Ext_R^i(R/I, R) \neq 0 \, \}.
\]

Cohen-Macaulay rings can also be characterized by looking at the set of associated primes of a height-generated ideal. If $R$ is a Noetherian ring and $I$ is an ideal of $R$, recall that a prime ideal $P$ that can be written as $(I:_R a)$ for some $a \in R \setminus I$ is called an \emph{associated prime} of $I$. The set of associated primes of $I$ is finite \cite[Theorem 6.5(i)]{matsumura}, and the set of zero-divisors for the $R$-module $R/I$ is the union of all the associated primes of $I$ \cite[Theorem 6.1(i)]{matsumura}.

In Cohen-Macaulay rings, all associated primes of a height-generated ideal $I$ have the same height as $I$. This is the so-called {unmixedness} property of Cohen-Macaulay rings. Stated another way, the height-generated ideals of Cohen-Macaulay rings have no embedded primes, where an embedded prime is an associated prime of $I$ that is not minimal over $I$.


\subsection{Unmixedness}

We begin by giving a definition for what it means for a non-Noetherian ring to satisfy the unmixedness property, then we consider unmixedness for $\mathcal{F}\llbracket X_1, \ldots, X_n \rrbracket$ for the case that $R$ is semilocal. We begin by defining generalized notions of prime divisors and associated primes, following the exposition in \cite{fuchsheinzerolberding}.
 
Let $R$ be a ring and $I$ an ideal of $R$. If $a \in R$ is such that $I \subsetneq (I:_R a)$, we say that $a$ is \emph{not prime} to $I$. The set
\[
	S(I) = \bigcup_{a \in R \setminus I} (I:_R a) = \{ x \in R \ | \ ax \in I \text{ for some } a \in R \setminus I\}
\]
	is the set of elements that are not prime to $I$. Observe that for $x \in R$, we have $x \notin S(I)$ if and only if $x$ is $(R/I)$-regular. A prime ideal $P$ is a \emph{prime divisor} of $I$ if $P \subseteq S(I)$. A prime divisor $P$ of $I$ is called a \emph{maximal} (resp. \emph{minimal}) prime divisor of $I$ if $P$ is maximal (resp. minimal) in $S(I)$ with respect to inclusion. If $R$ is a non-Noetherian ring, it is possible that a prime divisor $P$ of $I$ cannot be written in the form $(I:_R a)$ for some $a \in R \setminus I$.

\begin{de} \cite[Section 1]{fuchsheinzerolberding} \label{nnassoc}
	Let $R$ be a ring, and let $P$ be a prime divisor of an ideal $I$. 
	\begin{enumerate}
		\item $P$ is a \emph{weak-Bourbaki associated} (or \emph{weakly associated}) \emph{prime} of $I$ if $P$ is minimal over $(I:_R a)$ for some $a \in R \setminus I$.
		\item $P$ is a \emph{Zariski-Samuel associated prime} of $I$ if $P = \sqrt{(I:_R a)}$ for some $a \in R \setminus I$.
		\item $P$ is a \emph{Krull associated prime} of $I$ if $P$ is a union of ideals of the form $(I :_R a)$.
	\end{enumerate} 
\end{de}

Note that for a proper ideal $I$ of a ring $R$, $S(I) = I$ if and only if $I$ is a prime ideal of $R$. Moreover, the complement of $S(I)$ is a saturated multiplicatively closed set, so $S(I)$ is a set-theoretic union of prime ideals \cite[p.34]{kaplansky}.


\begin{de} \label{defunmixed}
	Let $R$ be a ring. An ideal $I$ is \emph{unmixed} if all prime divisors of $I$ have the same height. The ring $R$ satisfies the \emph{unmixedness property} if every height-generated ideal of $R$ is unmixed.
\end{de}

To prove statements about unmixedness, it will be useful to know when extensions of power series rings are integral. The next lemma gives a condition for integrality in power series ring extensions.


\begin{lemma} \label{integralbasering}
	Let $R_\alpha \subseteq R_\beta$ be an extension of rings such that $R_\beta$ is finitely generated as an $R_\alpha$-module. Then $R_\alpha \llbracket X_1, \ldots, X_n \rrbracket \subseteq R_\beta \llbracket X_1, \ldots, X_n \rrbracket$ is an integral extension. In particular, if $k_\alpha \subseteq k_\beta$ is a finite field extension, then $k_\alpha \llbracket X_1, \ldots, X_n \rrbracket \subseteq k_\beta \llbracket X_1, \ldots, X_n \rrbracket$ is an integral extension.
\end{lemma}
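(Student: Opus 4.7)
The plan is to reduce the claim to the statement that $R_\beta\llbracket X_1,\ldots,X_n\rrbracket$ is module-finite over $R_\alpha\llbracket X_1,\ldots,X_n\rrbracket$; integrality of every element will then follow from the standard determinant-trick argument (Cayley--Hamilton applied to the multiplication-by-$f$ endomorphism), which says that in any ring extension $A\subseteq B$ with $B$ a finitely generated $A$-module, every element of $B$ is integral over $A$.

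The heart of the argument is then the following claim: if $b_1,\ldots,b_m$ generate $R_\beta$ as an $R_\alpha$-module, then the same elements generate $R_\beta\llbracket X_1,\ldots,X_n\rrbracket$ as an $R_\alpha\llbracket X_1,\ldots,X_n\rrbracket$-module. To verify this, I would take an arbitrary $f=\sum_\mu a_\mu X^\mu\in R_\beta\llbracket X_1,\ldots,X_n\rrbracket$ with $a_\mu\in R_\beta$, and for each multi-index $\mu$ pick coefficients $c_{\mu,1},\ldots,c_{\mu,m}\in R_\alpha$ so that $a_\mu=\sum_{j=1}^m c_{\mu,j}b_j$. Setting
\[
g_j \;=\; \sum_{\mu} c_{\mu,j}X^\mu \;\in\; R_\alpha\llbracket X_1,\ldots,X_n\rrbracket, \qquad j=1,\ldots,m,
\]
one checks coefficient-by-coefficient that $\sum_{j=1}^m b_j g_j=f$: for each fixed monomial $X^\mu$, both sides contribute $\sum_j c_{\mu,j} b_j = a_\mu$, and the sum over $j$ is finite, so no convergence issues arise in the ambient power series ring. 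This shows $R_\beta\llbracket X_1,\ldots,X_n\rrbracket=\sum_{j=1}^m b_j\cdot R_\alpha\llbracket X_1,\ldots,X_n\rrbracket$, which is the desired module-finiteness.

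The ``in particular'' statement follows at once, since a finite field extension $k_\alpha\subseteq k_\beta$ makes $k_\beta$ a finite-dimensional $k_\alpha$-vector space, hence a finitely generated $k_\alpha$-module, and the general argument applies. The main (minor) obstacle is simply being careful that the re-indexing step produces a legitimate power series in each variable $j$ — that is, that the collection of $c_{\mu,j}$ as $\mu$ varies really defines an element of $R_\alpha\llbracket X_1,\ldots,X_n\rrbracket$ — but this is immediate since the $c_{\mu,j}$ are just arbitrary elements of $R_\alpha$ indexed by multi-indices, exactly the data of a formal power series over $R_\alpha$.
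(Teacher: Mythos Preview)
Your proof is correct and follows essentially the same route as the paper: both choose module generators $b_1,\ldots,b_m$ of $R_\beta$ over $R_\alpha$, decompose each coefficient of $f$ in terms of them, and regroup to write $f=\sum_j b_j g_j$ with $g_j\in R_\alpha\llbracket X_1,\ldots,X_n\rrbracket$. The only cosmetic difference is the final sentence---the paper observes $f\in S_\alpha[b_1,\ldots,b_m]$ and cites integrality of the $b_j$ over $R_\alpha$, whereas you phrase it as module-finiteness of $S_\beta$ over $S_\alpha$ and invoke Cayley--Hamilton; these are equivalent.
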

\begin{proof}
	Write $S_\alpha = R_\alpha \llbracket X_1, \ldots, X_n \rrbracket$ and $S_\beta = R_\beta \llbracket X_1, \ldots, X_n \rrbracket$. Let $f \in S_\beta$. Write $f = \sum_\delta r_\delta X^\delta$ with $r_\delta \in R_\beta$, where $\delta = (\delta_1, \ldots, \delta_n) \in \mathbb{N}^n$, and $X^\delta$ stands for $X_1^{\delta_1} \cdots X_n^{\delta_n}$.

	Since $R_\beta$ is a finitely generated $R_\alpha$-module, we can write $R_\beta = R_\alpha  b_1 + \cdots + R_\alpha  b_k$ for some $b_1, \ldots, b_k \in R_\beta$. Then for each $\delta$, we can write $r_\delta = a_{\delta, 1} b_1 + \cdots + a_{\delta, k} b_k$ for some $a_{\delta,1}, \ldots, a_{\delta,k} \in R_\alpha$. Then we have
	\begin{align*}
		f = \sum_\delta r_\delta X^\delta &= \sum_\delta (a_{\delta, 1} b_1 + \cdots + a_{\delta, k} b_k) X^\delta \\
		&= b_1 \sum_\delta  a_{\delta, 1} X^\delta + \cdots + b_k \sum_\delta a_{\delta, k} X^\delta
		\in  S_\alpha[b_1, \ldots, b_k] \subseteq S_\beta
	\end{align*}
	where $S_\alpha [b_1, \ldots, b_k]$ is integral over $S_\alpha$ since $b_1, \ldots, b_k$ are integral over $R_\alpha$ \cite[Theorem 9.1]{matsumura}. Hence, $f$ is integral over $S_\alpha$. The choice $f \in S_\beta$ was arbitrary, so $S_\alpha \subseteq S_\beta$ is an integral extension.

\end{proof}

\begin{prop}\label{integralext}
	Let $R$ be a 0-dimensional ring that is integral over an Artinian subring $R_0$. Write $\mathcal{F} = \mathcal{F}(R_0, R) = \{R_\alpha\}$. Then $\mathcal{F} \llbracket X_1, \ldots, X_n \rrbracket$ is an integral extension of $R_\alpha \llbracket X_1, \ldots, X_n \rrbracket$ for each $\alpha$.
\end{prop}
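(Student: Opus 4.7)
The plan is to reduce the proposition to Lemma \ref{integralbasering} by using the directedness of the family $\mathcal{F}$ to replace an arbitrary element of $\mathcal{F}\llbracket X_1, \ldots, X_n \rrbracket$ with one sitting in a power series ring over an Artinian ring that is finitely generated as an $R_\alpha$-module.

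First, fix $\alpha$ and pick an arbitrary $f \in \mathcal{F}\llbracket X_1, \ldots, X_n \rrbracket$. By the definition of the ring of Artinian power series (or Lemma \ref{directedunion}), there is some $R_\beta \in \mathcal{F}$ such that $f \in R_\beta \llbracket X_1, \ldots, X_n \rrbracket$. Since $\mathcal{F}$ is a directed family under inclusion, I would then choose $R_\gamma \in \mathcal{F}$ with $R_\alpha \cup R_\beta \subseteq R_\gamma$, so that $f \in R_\gamma \llbracket X_1, \ldots, X_n \rrbracket$.

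Next, I would verify that $R_\gamma$ is finitely generated as an $R_\alpha$-module, which is the crux of the argument. Since $R_\gamma \in \mathcal{F}(R_0, R)$, there exist $c_1, \ldots, c_\ell \in R$ with $R_\gamma = R_0[c_1, \ldots, c_\ell]$. Because $R_0 \subseteq R_\alpha$, we also have $R_\gamma = R_\alpha[c_1, \ldots, c_\ell]$. Each $c_i$ lies in $R$, which is integral over $R_0$ and hence over $R_\alpha$, so $R_\gamma$ is a finitely generated ring extension of $R_\alpha$ by integral elements; by \cite[Theorem 9.1]{matsumura} or the analogous statement used in the proof of Lemma \ref{integralbasering}, $R_\gamma$ is a finitely generated $R_\alpha$-module.

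Once this is established, Lemma \ref{integralbasering} applied to the extension $R_\alpha \subseteq R_\gamma$ immediately gives that $R_\alpha \llbracket X_1, \ldots, X_n \rrbracket \subseteq R_\gamma \llbracket X_1, \ldots, X_n \rrbracket$ is integral, and in particular $f$ is integral over $R_\alpha \llbracket X_1, \ldots, X_n \rrbracket$. Since $f$ was arbitrary, the extension $R_\alpha \llbracket X_1, \ldots, X_n \rrbracket \subseteq \mathcal{F}\llbracket X_1, \ldots, X_n \rrbracket$ is integral. The only subtle step is verifying that $R_\gamma$ is module-finite over $R_\alpha$ (as opposed to merely ring-finite); everything else is bookkeeping with directed unions and an appeal to the previous lemma.
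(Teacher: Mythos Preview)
Your proof is correct and rests on the same key ingredient as the paper, namely Lemma~\ref{integralbasering}. The organization differs slightly. The paper first shows that $S = \mathcal{F}\llbracket X_1, \ldots, X_n \rrbracket$ is integral over $R_0\llbracket X_1, \ldots, X_n \rrbracket$ (using only that each $R_\beta \in \mathcal{F}$ is module-finite over $R_0$, which is immediate from the definition of $\mathcal{F}(R_0,R)$), and then observes that the tower
\[
R_0\llbracket X_1, \ldots, X_n \rrbracket \subseteq R_\alpha\llbracket X_1, \ldots, X_n \rrbracket \subseteq S
\]
forces $S$ to be integral over the intermediate ring for every $\alpha$. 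This sidesteps your ``subtle step'' of checking that $R_\gamma$ is module-finite over an arbitrary $R_\alpha$. Your direct argument is equally valid and arguably more transparent about why each fixed $R_\alpha$ works; the paper's route is marginally shorter because module-finiteness over $R_0$ comes for free.
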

\begin{proof}
	Let $S = \mathcal{F} \llbracket X_1, \ldots, X_n \rrbracket$, and let $f \in S$. Then $f \in R_\alpha \llbracket X_1, \ldots, X_n \rrbracket$ for some $\alpha$ where $R_\alpha$ is finitely generated as an $R_0$-module. By Lemma \ref{integralbasering}, $R_0 \llbracket X_1, \ldots, X_n \rrbracket \subseteq R_\alpha \llbracket X_1, \ldots, X_n \rrbracket$ is an integral extension, so $f$ is integral over $R_0 \llbracket X_1, \ldots, X_n \rrbracket$. Our initial choice of $f \in S$ was arbitrary, so $S$ is integral over $R_0 \llbracket X_1, \ldots, X_n \rrbracket$. For each $\alpha$, we have the chain of ring extensions
	\[
		R_0 \llbracket X_1, \ldots, X_n \rrbracket \subseteq R_\alpha \llbracket X_1, \ldots, X_n \rrbracket \subseteq S,
	\]
	so $S$ is also integral over $R_\alpha \llbracket X_1, \ldots, X_n \rrbracket$ for each $\alpha$.
\end{proof}

Let $R$ be a semilocal 0-dimensional ring, and let $R_0$ be an Artinian subring of $R$ that is large in $R$. For the directed family $\mathcal{F} = \mathcal{F}(R_0, R) = \{R_\alpha\}$ of Artinian subrings of $R$, observe that $R_\alpha \llbracket X_1, \ldots, X_n \rrbracket$ is Cohen-Macaulay for each $\alpha$ since it is a power series ring over an Artinian ring. Hence, $\mathcal{F}\llbracket X_1, \ldots, X_n \rrbracket$ is a directed union of Cohen-Macaulay subrings that is an integral extension of $R_\alpha \llbracket X_1, \ldots, X_n \rrbracket$ for each $\alpha$. Moreover, $\mathcal{F} \llbracket X_1, \ldots, X_n \rrbracket$ has Noetherian prime spectrum, by Proposition \ref{noethspectrum}. We apply these observations to unmixedness in $\mathcal{F}\llbracket X_1, \ldots, X_n \rrbracket$.

%
%
%
%


\begin{lemma} \label{unmixedness} \cite[Lemma 4.1]{olb1}
	Let $S = \bigcup_\alpha S_\alpha$ be a directed union of Cohen-Macaulay subrings such that $S_\alpha \subseteq S$ is an integral extension for each $\alpha$. Let $I$ be a proper height-generated ideal of $S$. Then every element of $S$ not prime to $I$ is an element of a minimal prime divisor of $I$ having the same height as $I$. If $S$ also has Noetherian prime spectrum, then every maximal prime divisor of $I$ is a minimal prime divisor of $I$ having the same height as $I$.
\end{lemma}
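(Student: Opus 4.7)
The plan is to reduce the problem to one of the Noetherian Cohen-Macaulay subrings $S_\alpha$, apply classical unmixedness there, then transfer the result back to $S$ via integrality. Since $I$ is height-generated, write $I = (f_1, \ldots, f_t)S$ with $t = \Ht(I)$. Given $x \in S$ not prime to $I$, pick $y \in S \setminus I$ with $xy \in I$, and write $xy = \sum_{i=1}^t f_i g_i$. I would then choose $\alpha$ large enough that $x, y, f_1, \ldots, f_t, g_1, \ldots, g_t$ all lie in $S_\alpha$, and set $J = (f_1, \ldots, f_t)S_\alpha$, so that $JS = I$, $xy \in J$, and $y \notin J$ (since $y \notin I \supseteq J$).

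The key technical step is to show $\Ht_{S_\alpha}(J) = t$. Krull's height theorem in the Noetherian ring $S_\alpha$ gives $\Ht(J) \leq t$. For the reverse inequality, if some minimal prime $P_\alpha$ of $J$ in $S_\alpha$ had height less than $t$, I would use lying over for the integral extension $S_\alpha \subseteq S$ to lift $P_\alpha$ to $P \in \spec S$; since $I = JS \subseteq P$, one has $\Ht_S(P) \geq \Ht(I) = t$, while the standard chain-contracting argument in integral extensions gives $\Ht_S(P) \leq \Ht(P_\alpha) < t$, a contradiction. Once $\Ht(J) = t$ is established, $J$ is height-generated in the Cohen-Macaulay ring $S_\alpha$, so by the classical unmixedness theorem every associated prime of $J$ is a minimal prime over $J$ of height $t$. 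Since $x$ is a zero-divisor on $S_\alpha/J$, some such associated prime $P_\alpha$ contains $x$. Lifting $P_\alpha$ by lying over produces $P \in \spec S$ with $x \in P$ and $I \subseteq P$; minimality of $P$ over $I$ in $S$ follows from the minimality of $P_\alpha$ over $J$ in $S_\alpha$ combined with incomparability in integral extensions, and the sandwich $t = \Ht(I) \leq \Ht(P) \leq \Ht(P_\alpha) = t$ forces $\Ht(P) = t$.

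For the second assertion, the Noetherian prime spectrum hypothesis ensures that there are only finitely many minimal prime divisors $P_1, \ldots, P_m$ of $I$ in $\spec S$; by the first part they all have height $t$, and $S(I) \subseteq P_1 \cup \cdots \cup P_m$. Given a maximal prime divisor $Q$ of $I$, one has $Q \subseteq S(I) \subseteq P_1 \cup \cdots \cup P_m$, so prime avoidance gives $Q \subseteq P_i$ for some $i$; combined with $P_i \subseteq S(I)$ and the maximality of $Q$, this forces $Q = P_i$. The principal obstacle is the height-transfer step $\Ht_{S_\alpha}(J) = t$, since without it the Cohen-Macaulay hypothesis on $S_\alpha$ would only yield associated primes of $J$ of some possibly smaller height, which need not lift to height-$t$ primes in $S$.
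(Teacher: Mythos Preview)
The paper does not supply its own proof of this lemma; it is quoted verbatim from \cite[Lemma~4.1]{olb1} and used as a black box in the proof of Theorem~\ref{unmixed}. So there is no in-paper argument to compare against.

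That said, your proposed argument is correct and is essentially the standard one: descend the data $x,y,f_1,\dots,f_t,g_1,\dots,g_t$ to some $S_\alpha$, use integrality of $S_\alpha\subseteq S$ (lying over and incomparability) to force $\Ht_{S_\alpha}(f_1,\dots,f_t)=t$, invoke the Noetherian unmixedness theorem in the Cohen--Macaulay ring $S_\alpha$ to place $x$ in a height-$t$ minimal prime $P_\alpha$ of $J$, and then lift $P_\alpha$ back to $S$ via lying over, with incomparability giving both minimality of $P$ over $I$ and $\Ht_S(P)=t$. One small imprecision: in the second part you assert that \emph{all} minimal prime divisors of $I$ have height $t$ ``by the first part,'' but the first part only produces, for each $x\in S(I)$, \emph{some} height-$t$ minimal prime containing it. The clean fix is to let $P_1,\dots,P_m$ be only the height-$t$ minimal primes over $I$ (finitely many by Noetherian spectrum); the first part gives $S(I)\subseteq P_1\cup\cdots\cup P_m$, and then your prime-avoidance argument for a maximal prime divisor $Q$ goes through unchanged. (In fact this also shows, a posteriori, that every minimal prime over $I$ equals some $P_i$ and hence has height $t$.)
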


\begin{thm} \label{unmixed}
	Let $R$ be a 0-dimensional semilocal ring and $R_0$ a large Artinian subring of $R$. Write $\mathcal{F} = \mathcal{F}(R_0, R) =  \{R_\alpha\}$. Let $I$ be a height-generated proper ideal of $\mathcal{F} \llbracket X_1, \ldots, X_n \rrbracket$. Then $I$ has only finitely many maximal prime divisors, each of which is a minimal prime divisor of the same height as $I$, and we have
	\[
		\{ \text{maximal prime divisors of } I \} = \{ \text{minimal prime divisors of }I\}.
	\]
\end{thm}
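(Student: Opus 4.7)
The plan is to invoke Lemma \ref{unmixedness} applied to $S := \mathcal{F}\llbracket X_1, \ldots, X_n\rrbracket$, and then upgrade its conclusion to the full statement using a short Zorn's lemma argument (for the set equality) and the Noetherian spectrum of $S$ (for finiteness).

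First I would verify the hypotheses of Lemma \ref{unmixedness}. By Definition \ref{artpow}, $S = \bigcup_\alpha S_\alpha$ with $S_\alpha := R_\alpha \llbracket X_1, \ldots, X_n \rrbracket$. Each $R_\alpha$ is Artinian and therefore Noetherian, so $S_\alpha$ is a formal power series ring over an Artinian ring and is Cohen-Macaulay by Matsumura's Theorem 23.5. Proposition \ref{integralext} supplies that $S_\alpha \subseteq S$ is integral for each $\alpha$. Finally, since $R$ is semilocal, Proposition \ref{noethspectrum} gives that $\spec S$ is Noetherian. Lemma \ref{unmixedness} then applies and yields that every maximal prime divisor of $I$ is a minimal prime divisor of $I$ having height $\Ht(I)$.

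Next I would prove the set equality. Let $P$ be a minimal prime divisor of $I$. Because $S(I)$ is a set-theoretic union of prime ideals and the union of a chain of primes contained in $S(I)$ is again a prime contained in $S(I)$, Zorn's lemma applied to the poset of primes of $S$ contained in $S(I)$ and containing $P$ produces a maximal prime divisor $P'$ of $I$ with $P \subseteq P'$. By the previous paragraph $P'$ is itself a minimal prime divisor of $I$, so the minimality of $P$ forces $P = P'$, whence $P$ is a maximal prime divisor. Together with the reverse inclusion from Lemma \ref{unmixedness}, this gives the claimed equality.

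For finiteness, let $T := S \setminus S(I)$; this is a saturated multiplicatively closed subset of $S$. The canonical map $\spec S_T \to \spec S$ is a homeomorphism onto the set of primes of $S$ disjoint from $T$, i.e.\ the prime divisors of $I$, and it restricts to a bijection between minimal primes of $S_T$ and minimal prime divisors of $I$. Since $\spec S$ is Noetherian, the subspace $\spec S_T$ is Noetherian, so $S_T$ has only finitely many minimal primes, giving finitely many minimal prime divisors of $I$, which by the previous paragraph are exactly the maximal prime divisors. I do not expect a serious technical obstacle; the one point requiring care is keeping track of the definition of minimal/maximal prime divisor as an extremum inside $S(I)$, rather than the Noetherian heuristic of a prime minimal over $I$, since in this non-Noetherian setting a minimal prime divisor need not contain $I$.
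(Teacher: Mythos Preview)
Your proposal is correct and follows essentially the same route as the paper: both verify the hypotheses of Lemma~\ref{unmixedness} (each $R_\alpha\llbracket X_1,\ldots,X_n\rrbracket$ Cohen--Macaulay, integrality from Proposition~\ref{integralext}, Noetherian spectrum from Proposition~\ref{noethspectrum}) and then invoke that lemma. The only difference is cosmetic: the paper's proof simply attributes the full conclusion to Lemma~\ref{unmixedness}, whereas you explicitly supply the two short steps the lemma's stated form does not literally contain---the Zorn argument for the reverse inclusion and the localization/Noetherian-spectrum argument for finiteness of the minimal prime divisors. Your added care about the non-Noetherian meaning of ``minimal prime divisor'' is appropriate and the arguments you give go through as written.
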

\begin{proof}
	Let $S = \mathcal{F} \llbracket X_1, \ldots, X_n \rrbracket =  \bigcup_\alpha (R_\alpha \llbracket X_1, \ldots, X_n \rrbracket)$. The ring $S$ has Noetherian prime spectrum by Proposition \ref{noethspectrum}, and $S$ is a directed union of Cohen-Macaulay subrings such that $S$ is integral over $R_\alpha \llbracket X_1, \ldots, X_n \rrbracket$ for each $\alpha$, by Proposition \ref{integralext}. By Lemma \ref{unmixedness}, since $I$ is a height-generated ideal of $S$, the set of maximal prime divisors of $I$ is finite, and it coincides with the set of minimal prime divisors of $I$. From Lemma \ref{unmixedness}, we also see that every prime divisor of $I$ has the same height as $I$.
\end{proof}

Theorem \ref{unmixed} shows that for any height-generated ideal $I$ of $\mathcal{F} \llbracket X_1, \ldots, X_n \rrbracket$, every prime divisor of $I$ is both maximal and minimal, having the same height as $I$, so $I$ is unmixed and $\mathcal{F} \llbracket X_1, \ldots, X_n \rrbracket$ satisfies the unmixedness property in the sense of Definition \ref{defunmixed}. Moreover, all three notions given in Definition \ref{nnassoc} of associated prime ideals over a height-generated ideal coincide in $\mathcal{F} \llbracket X_1, \ldots, X_n \rrbracket$.

\subsection{Regular Sequences}

Next, we examine statements about regular sequences in $\mathcal{F} \llbracket X_1, \ldots, X_n \rrbracket$. 

\begin{thm} \label{gradeheight}
	Let $R$ be a 0-dimensional semilocal ring, and let $R_0$ and $\mathcal{F}$ be as in Theorem \ref{unmixed}. Let $I$ be an ideal of $\mathcal{F} \llbracket X_1, \ldots, X_n \rrbracket$. Then every maximal regular sequence in $I$ has length $\Ht(I)$, i.e. $\cgrade(I) = \Ht(I)$.
\end{thm}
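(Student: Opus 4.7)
The plan is to take an arbitrary maximal regular sequence $f_1, \ldots, f_t$ in $I$ and prove that $t = \Ht(I)$; this single equality simultaneously establishes that $\cgrade(I) = \Ht(I)$ and that all maximal regular sequences in $I$ have the same length $\Ht(I)$.

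First I would verify that maximal regular sequences in $I$ actually exist. By Theorem \ref{regular}, $\dim S = n$ where $S = \mathcal{F}\llbracket X_1, \ldots, X_n \rrbracket$, and by Corollary \ref{regseqheightgenart} any regular sequence $f_1, \ldots, f_t$ in $S$ generates an ideal of height exactly $t$. Hence every regular sequence in $S$ has length at most $n$, so the process of extending a regular sequence in $I$ terminates.

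Now fix a maximal regular sequence $f_1, \ldots, f_t$ in $I$ and set $J = (f_1, \ldots, f_t)$. By Corollary \ref{regseqheightgenart}, $J$ is height-generated with $\Ht(J) = t$. Since $J \subseteq I$, this already gives $\Ht(I) \geq t$. For the reverse inequality, I would invoke the unmixedness theorem (Theorem \ref{unmixed}): $J$ has only finitely many maximal prime divisors, they coincide with the minimal prime divisors of $J$, and each has height $t$. The set $S(J)$ of elements not prime to $J$ is the set-theoretic union of these finitely many maximal prime divisors. By maximality of the regular sequence, no element of $I$ is $S/J$-regular, equivalently $I \subseteq S(J)$. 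Applying prime avoidance to the finite union $I \subseteq \bigcup_{P} P$ (over maximal prime divisors $P$ of $J$) yields $I \subseteq P$ for some such $P$, and since $\Ht(P) = t$ we conclude $\Ht(I) \leq t$. Combining, $\Ht(I) = t$, as desired.

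The main obstacle is precisely the step $I \subseteq S(J) \implies I \subseteq P$ for a single prime $P$: this fails in general without finiteness of the covering primes, so it rests crucially on unmixedness (Theorem \ref{unmixed}), which in turn depends on the semilocal hypothesis through Proposition \ref{noethspectrum}. Everything else is bookkeeping, so the weight of the proof is carried by the Cohen-Macaulay-like unmixedness input established in the previous subsection.
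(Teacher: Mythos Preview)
Your argument is correct and essentially identical to the paper's: both use Corollary \ref{regseqheightgenart} to bound the length of a regular sequence by $\Ht(I)$, and both invoke Theorem \ref{unmixed} to see that $S(J)$ is a finite union of height-$t$ primes, then apply prime avoidance. The only cosmetic difference is direction---the paper shows that any regular sequence of length $k < \Ht(I)$ can be extended, whereas you take a maximal sequence and show $\Ht(I) \le t$ via $I \subseteq S(J)$; these are contrapositives of the same step.
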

\begin{proof}
	Put $S = \mathcal{F} \llbracket X_1, \ldots, X_n \rrbracket$. Let $t = \Ht_S(I)$, and suppose $f_1, f_2, \ldots, f_k$ is a regular sequence in $I$. Let $J_k =(f_1, \ldots, f_k)S$, and by Corollary \ref{regseqheightgenart}, we have $k = \Ht(J_k) \leq \Ht(I) = t$. If $k = t$, then $f_1, \ldots, f_t$ is a maximal regular sequence in $I$. 
	
	Next, suppose $k < t = \Ht(I)$. 
	 Since $J_k$ is a height-generated ideal, by Theorem \ref{unmixed}, there are finitely many prime divisors of $J_k$, say $Q_1, \ldots, Q_m$, each of height $k$. Since $\Ht(I) > k$, we have $I \not\subseteq Q_1 \cup \cdots \cup Q_m = S(J_k)$ by prime avoidance. We can then find an element $f_{k+1} \in I$ that is $S/J_k$-regular, and so $f_1, \ldots, f_k, f_{k+1}$ is a regular sequence in $S$. We can continue this process to obtain a maximal regular sequence of length equal to $\Ht(I)$. Hence, every maximal regular sequence in $I$ will have length $\Ht(I)$, and we can conclude $\cgrade (I) = \Ht(I)$.
\end{proof}

Theorem \ref{gradeheight} shows that we have the ``grade = height" property in $\mathcal{F}\llbracket X_1, \ldots, X_n \rrbracket$. We proceed to show that height-generated ideals are generated by regular sequences. We will use the following lemma which is a stronger version of prime avoidance.

%

\begin{lemma} \label{regularreplacement} \cite[Theorem 124]{kaplansky}
	Let $P_1, \ldots, P_n$ be prime ideals in a commutative ring $S$, and let $I$ be an ideal of $S$, and $x \in S$ such that $(x, I) \not\subseteq P_1 \cup \cdots \cup P_n$. Then there exists an element $y \in I$ such that $x + y \notin P_1 \cup \cdots \cup P_n$.
\end{lemma}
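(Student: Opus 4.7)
The plan is to reduce this to the classical form of prime avoidance. First I would remove redundancies: if some $P_i$ lies inside another $P_j$ in the list, dropping $P_i$ leaves both the hypothesis $(x, I) \not\subseteq P_1 \cup \cdots \cup P_n$ and the desired conclusion unchanged, so we may assume the family is \emph{irredundant}, in the sense that no $P_i$ is contained in any other. The hypothesis $(x,I) \not\subseteq \bigcup_i P_i$ trivially forces $(x,I) \not\subseteq P_i$ for each individual $i$; equivalently, whenever $x \in P_i$ we must have $I \not\subseteq P_i$. If $x$ already avoids every $P_i$, then $y = 0$ works and we are done; otherwise, after relabeling, assume $x \in P_1, \ldots, P_k$ and $x \notin P_{k+1}, \ldots, P_n$ for some $1 \leq k \leq n$.

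Next I would translate the requirement ``$x + y \notin P_i$ for all $i$'' into conditions on $y$ alone. For $i \leq k$, since $x \in P_i$, we have $x + y \notin P_i$ if and only if $y \notin P_i$. For $j > k$, since $x \notin P_j$, the containment $y \in P_j$ automatically forces $x + y \notin P_j$. Setting $J := I \cap P_{k+1} \cap \cdots \cap P_n$ (with the convention $J = I$ when $k = n$), it therefore suffices to exhibit
\[
y \in J \setminus (P_1 \cup \cdots \cup P_k).
\]

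To produce such a $y$, I would apply classical prime avoidance to the ideal $J$ against the primes $P_1, \ldots, P_k$: such a $y$ exists provided $J \not\subseteq P_i$ for each $i \leq k$. Suppose for contradiction that $J \subseteq P_i$ for some $i \leq k$. Then in particular $I \cdot P_{k+1} \cdots P_n \subseteq J \subseteq P_i$; since $P_i$ is prime, either $I \subseteq P_i$, contradicting our standing observation that $I \not\subseteq P_i$ for $i \leq k$, or $P_j \subseteq P_i$ for some $j > k$, contradicting irredundancy. Hence $J \not\subseteq P_i$ for every $i \leq k$, and prime avoidance supplies the desired $y$, which by the translation above gives $x + y \notin P_1 \cup \cdots \cup P_n$.

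The main obstacle I anticipate is not the core argument, which is the standard product-containment trick, but rather the bookkeeping around the degenerate cases $k = 0$ (where $y = 0$ suffices) and $k = n$ (where $J = I$ and one applies prime avoidance directly to $I$), together with justifying the reduction to the irredundant family so that the alternative $P_j \subseteq P_i$ across the partition $\{1, \ldots, k\}$ versus $\{k+1, \ldots, n\}$ is ruled out. Notice that no Noetherian or finiteness hypothesis on $S$ is needed for this proof.
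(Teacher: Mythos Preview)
The paper does not supply its own proof of this lemma; it is quoted verbatim from Kaplansky with only a citation, so there is no in-paper argument to compare against. Your argument is correct and is essentially the standard one: the reduction to an irredundant family, the partition of the $P_i$ according to whether they contain $x$, the translation of the condition on $x+y$ into $y \in J \setminus (P_1 \cup \cdots \cup P_k)$ with $J = I \cap P_{k+1} \cap \cdots \cap P_n$, and the product trick $I \cdot P_{k+1} \cdots P_n \subseteq J$ to feed into ordinary prime avoidance all go through exactly as you describe, and your handling of the boundary cases $k=0$ and $k=n$ is fine.
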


\begin{thm}
	Let $R$ be a 0-dimensional semilocal ring, and let $R_0$ and $\mathcal{F}$ be as in Theorem \ref{unmixed}. Then the height-generated ideals of $\mathcal{F}\llbracket X_1, \ldots, X_n \rrbracket$ are precisely the ideals generated by a regular sequence.
\end{thm}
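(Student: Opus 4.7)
The plan is to prove both directions of the biconditional. The reverse direction is immediate: if $I = (f_1, \ldots, f_t)$ is generated by a regular sequence, then Corollary \ref{regseqheightgenart} already gives $\Ht(I) = t$, so $I$ is height-generated. The content is the forward direction, where we take a height-generated ideal $I$ of $S = \mathcal{F}\llbracket X_1, \ldots, X_n \rrbracket$ with $\Ht(I) = t$ and some generating set $g_1, \ldots, g_t$, and inductively modify these generators to produce a regular sequence $f_1, \ldots, f_t$ that still generates $I$.

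The inductive construction would proceed as follows. Suppose we have already constructed $f_1, \ldots, f_{j-1}$ forming a regular sequence such that $(f_1, \ldots, f_{j-1}, g_j, \ldots, g_t) = I$, with the convention that for $j = 1$ this is the original generating set. The ideal $J_{j-1} := (f_1, \ldots, f_{j-1})$ is height-generated of height $j-1$ by Corollary \ref{regseqheightgenart}. By Theorem \ref{unmixed}, $J_{j-1}$ has only finitely many prime divisors $Q_1, \ldots, Q_l$, each of height exactly $j-1$. Since $\Ht(I) = t \geq j > j-1$, no $Q_i$ can contain $I$. Hence $I \not\subseteq Q_1 \cup \cdots \cup Q_l$, and writing $I = (g_j, (f_1, \ldots, f_{j-1}, g_{j+1}, \ldots, g_t))$, Lemma \ref{regularreplacement} produces $y \in (f_1, \ldots, f_{j-1}, g_{j+1}, \ldots, g_t)$ with $f_j := g_j + y \notin Q_1 \cup \cdots \cup Q_l$. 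Then $(f_1, \ldots, f_j, g_{j+1}, \ldots, g_t) = I$, and since the set $S(J_{j-1})$ of elements not prime to $J_{j-1}$ equals $Q_1 \cup \cdots \cup Q_l$ (by the unmixedness conclusion of Theorem \ref{unmixed}, every element not prime to $J_{j-1}$ lies in a minimal prime divisor), $f_j$ is regular modulo $J_{j-1}$. Thus $f_1, \ldots, f_j$ is a regular sequence, completing the inductive step. After $t$ steps we have the desired regular sequence generating $I$.

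The subtle point — and the step I expect to require care to phrase correctly — is justifying that $f_j$ being outside every minimal prime divisor of $J_{j-1}$ is enough to ensure it is a non-zerodivisor modulo $J_{j-1}$. In a Noetherian ring this is automatic since zerodivisors on $S/J_{j-1}$ are the union of associated primes, but in our non-Noetherian setting this must come from Theorem \ref{unmixed}, which gives that the set $S(J_{j-1})$ of elements not prime to $J_{j-1}$ is exactly the union of the finitely many (equal) maximal and minimal prime divisors of $J_{j-1}$, all of the same height. The base case $j=1$ also needs the observation that $(0)$ itself is height-generated (vacuously), so Theorem \ref{unmixed} applies to identify the zerodivisors of $S$ as the union of the minimal primes $MS$ for $M \in \spec R$ (by Lemma \ref{minlocal}), and $\Ht(I) \geq 1$ prevents $I$ from lying in this union.
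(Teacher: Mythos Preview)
Your proposal is correct and follows essentially the same strategy as the paper: establish the reverse direction via Corollary \ref{regseqheightgenart}, and for the forward direction inductively replace generators using Theorem \ref{unmixed} (to get finitely many prime divisors of the partial ideal, all of the correct height, whose union is exactly $S(J_{j-1})$) together with the strong prime avoidance Lemma \ref{regularreplacement}. Your handling of the inductive step is in fact slightly cleaner than the paper's: you apply Lemma \ref{regularreplacement} to the pair $\big(g_j,\ (f_1,\ldots,f_{j-1},g_{j+1},\ldots,g_t)\big)$ directly, whereas the paper first asserts that one of the remaining generators already lies outside the union of prime divisors and then applies the lemma with only $I_r$ in the ideal slot---your route avoids that extra (and not entirely obvious) claim while reaching the same conclusion.
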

\begin{proof}
	If $I$ is generated by a regular sequence of length $t$, then we have $\text{ht}(I) = t$ by Corollary \ref{regseqheightgenart}. So it suffices to show that height-generated ideals can be generated by a regular sequence. 

	Put $S = \mathcal{F}\llbracket X_1, \ldots, X_n \rrbracket$, and let $I = (f_1, \ldots, f_t)S$ with $t = \text{ht}(I)$. If $t = 0$, then $I$ is generated by the empty sequence, and the statement holds. Next, assume $t \geq 1$. For $\spec R = \{M_1, \ldots, M_k \}$, by Lemma \ref{minlocal}, the minimal prime ideals of $S$ are $\{Q_1, \ldots, Q_k\}$, where $Q_i = M_i \cdot S$. Since $\Ht(I) \geq 1$, we have $I \not\subseteq Q_1 \cup \cdots \cup Q_k$, so by Lemma \ref{regularreplacement} applied to $(f_1, J)$ where $J = (f_2, \ldots, f_t)$, there exists an element $y \in J$ such that $f_1 + y \notin Q_1 \cup \cdots \cup Q_k$. Put $h = f_1 + y$, and we have
	\[
		h = f_1 + g_2 f_2 + \cdots + g_t f_t \ \text{ for some }  g_2, \ldots, g_t \in S.
	\] 
	We observe that $I = (f_1, f_2, \ldots, f_t)S = (h, f_2, \ldots, f_t)S$ since $f_1 \in (h, f_2, \ldots, f_t)$. So   if $f_1$ is not a regular element, we can replace $f_1$ with a regular element.

	Next, suppose $f_1, \ldots, f_r$ is a regular sequence for $1 \leq r \leq t$. If $r = t$, then the ideal $I$ is already generated by a regular sequence. If $r < t$, let $I_r = (f_1, \ldots, f_r)$, so $I_r$ is generated by a regular sequence. We have $\Ht(I_r) = r$ by Corollary \ref{regseqheightgenart}, and by Theorem \ref{unmixed}, there are only finitely many prime divisors of $I_r$, say $P_1, \ldots, P_m$, each with height $r$. Since $\Ht(I) = t > r$, we have $I \not\subseteq P_1 \cup \cdots \cup P_m$ by prime avoidance. In particular, one of $f_{r+1}, \ldots, f_t$ is not contained in $P_1 \cup \cdots \cup P_m$. Without loss of generality, assume $f_{r+1} \notin P_1 \cup \cdots \cup P_m$. Then we can apply Lemma \ref{regularreplacement} to $(f_{r+1}, I_r)$ to produce an element $h_{r+1} = f_{r+1} + y_r$ such that $h_{r+1} \notin P_1 \cup \cdots \cup P_m$ and $y_r \in I_r$. We see that $f_1, \ldots, f_r, h_{r+1}$ is a regular sequence since $h_{r+1}$ is $S/I_r$-regular, and we also have $(f_1, \ldots, f_r, h_{r+1}, f_{r+2}, \ldots, f_t) = (f_1, \ldots, f_{r+1}, \ldots, f_t) = I$ since $h_{r+1} \in (f_1, \ldots, f_{r+1})$ and $f_{r+1} \in (f_1, \ldots, f_r, h_{r+1})$. Continuing in this way, we can keep replacing the generating set of $I$ with a regular sequence and increase $r$ incrementally until we reach $r = t$. We can thus conclude that a height-generated ideal $I$ can be generated by a regular sequence.
\end{proof}

\begin{cor}
	Let $R$ be a 0-dimensional semilocal ring, and let $R_0$ and $\mathcal{F}$ be as in Theorem \ref{unmixed}. Put $S = \mathcal{F} \llbracket X_1, \ldots, X_n \rrbracket$. Then $f_1, \ldots, f_t$ is a regular sequence in $S$ if and only if $\Ht((f_1, \ldots, f_i)S) = i$ for all $1 \leq i \leq t$.
\end{cor}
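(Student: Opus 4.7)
The plan is to prove the two implications separately, with most of the content in the ``if'' direction via induction on the length of the sequence.

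For the ``only if'' direction, suppose $f_1, \ldots, f_t$ is a regular sequence in $S$. For each $i \in \{1, \ldots, t\}$, the truncation $f_1, \ldots, f_i$ is also a regular sequence, so Corollary \ref{regseqheightgenart} applies directly to give $\Ht((f_1, \ldots, f_i)S) = i$. This half is essentially a restatement of Corollary \ref{regseqheightgenart}.

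For the ``if'' direction, assume $\Ht((f_1, \ldots, f_i)S) = i$ for every $1 \leq i \leq t$, and induct on $i$ to show $f_1, \ldots, f_i$ is a regular sequence. For the base case $i = 1$, note that the zero ideal is vacuously a height-generated proper ideal of $S$ of height $0$, so by Theorem \ref{unmixed} every prime divisor of $(0)$ is a minimal prime of $S$; thus the zero-divisors of $S$ are exactly the union of minimal primes. Since $\Ht((f_1)S) = 1$, the element $f_1$ lies in no minimal prime of $S$, hence $f_1$ is $S$-regular. For the inductive step, suppose $f_1, \ldots, f_i$ is a regular sequence, so $(f_1, \ldots, f_i)S$ is a height-generated ideal of height $i$. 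By Theorem \ref{unmixed} applied to this ideal, every prime divisor of $(f_1, \ldots, f_i)S$ has height exactly $i$. If $f_{i+1}$ were contained in some prime divisor $P$ of $(f_1, \ldots, f_i)S$, then $(f_1, \ldots, f_{i+1})S \subseteq P$ would force $\Ht((f_1, \ldots, f_{i+1})S) \leq \Ht(P) = i$, contradicting the hypothesis $\Ht((f_1, \ldots, f_{i+1})S) = i+1$. So $f_{i+1}$ avoids every prime divisor of $(f_1, \ldots, f_i)S$, which (by the description of $S((f_1, \ldots, f_i)S)$ as the union of its prime divisors) means $f_{i+1}$ is $S/(f_1, \ldots, f_i)S$-regular. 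Thus $f_1, \ldots, f_{i+1}$ is a regular sequence, completing the induction. One also observes $(f_1, \ldots, f_t)S$ is proper, since its height equals $t \leq n = \dim S < \infty$ by Theorem \ref{regular}.

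The main conceptual point, and the only potentially delicate step, is the passage from ``$f_{i+1}$ lies in no prime divisor of $(f_1, \ldots, f_i)S$'' to ``$f_{i+1}$ is regular modulo $(f_1, \ldots, f_i)S$.'' This relies on the fact that $S(J)$ is precisely the union of the prime divisors of $J$, which is true for any ideal $J$ in any ring by definition of prime divisor, together with the fact from Theorem \ref{unmixed} that in our ring $S$ there are only finitely many prime divisors of a height-generated ideal (so prime avoidance, used implicitly, is unproblematic even though $S$ is non-Noetherian). Aside from this, the argument is a straightforward iterated application of Theorem \ref{unmixed} and Corollary \ref{regseqheightgenart}.
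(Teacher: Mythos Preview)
Your proof is correct and follows essentially the same approach as the paper: both directions use Corollary \ref{regseqheightgenart} for the forward implication and an induction based on Theorem \ref{unmixed} for the converse, arguing that $f_{i+1}$ cannot lie in any prime divisor of $(f_1,\ldots,f_i)S$ on height grounds. One minor remark: your aside about prime avoidance is unnecessary---once you know $f_{i+1}$ lies in no prime divisor of $I_i$, the identity $S(I_i)=\bigcup\{\text{prime divisors of }I_i\}$ (which you correctly cite) already gives $f_{i+1}\notin S(I_i)$ directly, without any avoidance argument or any need for finiteness of the set of prime divisors.
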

\begin{proof}
	If $f_1, \ldots, f_t$ is a regular sequence in $S$, then $(f_1, \ldots, f_i)S$ has height $i$ for each $1 \leq i \leq t$, by Corollary \ref{regseqheightgenart}. Conversely, suppose $\Ht((f_1, \ldots, f_i)S) = i$ for all $1 \leq i \leq t$. Since $\Ht(f_1 S) = 1$, we see that $f_1$ is not contained in any minimal prime ideal of $S$, and by Theorem \ref{unmixed}, the set of zero-divisors of $S$ is the union of the minimal prime ideals of $S$. This implies $f_1$ is a regular element. Next, suppose $f_1, \ldots, f_i$ is regular for $1 \leq i < t$. Then $I_i = (f_1, \ldots, f_i)$ is a height-generated ideal, so all primes divisors of $I_i$ have height $i$, by Theorem \ref{unmixed}. Since $\Ht(f_1, \ldots, f_i, f_{i+1}) = i+1$ by hypothesis, $f_{i+1}$ is not contained in any prime divisor of $I_i$, so $f_{i+1}$ is $(S/I_i)$-regular. Using this argument for $i = 1, 2, \ldots, t-1$, we see that $f_1, \ldots, f_t$ is a regular sequence.
\end{proof}

\subsection{Notions of Non-Noetherian Cohen-Macaulayness}

Asgharzadeh and Tousi \cite{at} have identified seven notions of non-Noetherian Cohen-Macaulayness in the literature. We recall from Definition \ref{nnassoc}  that a prime ideal $P$ of a ring $R$ is a \emph{weak-Bourbaki associated (or weakly associated) prime} of an ideal $I$ if $P$ is minimal over $(I :_R a)$ for some $a \in R \setminus I$. One of the seven notions of non-Noetherian Cohen-Macaulayness features weak-Bourbaki associated primes.

\begin{de} \cite[Definitions 1 and 2]{H2} Let $I$ be a finitely generated ideal of $R$, and let $\mu(I)$ be the minimal number of elements needed to generate $I$. If for each $I$ with $\Ht (I) \geq \mu(I)$, the set of minimal prime ideals of $I$ coincides with the set of weak-Bourbaki associated primes of $I$, then we call such rings \emph{weak-Bourbaki unmixed}. If $R$ is weak-Bourbaki unmixed, we say that $R$ is Cohen-Macaulay in the sense of ``WB".
\end{de}

We present an example of a power series ring over a 0-dimensional ring that does not satisfy the ``WB" notion of non-Noetherian Cohen-Macaulayness.

\begin{ex}
	Consider the ring
	\[
		R = \mathbb{Q}[Y, \{ Z_i\}_{i=0}^\infty] / (Y^2, \{Z_i^2\}_{i=0}^\infty, \{YZ_i\}_{i=0}^\infty).
	\] 
	We first show that $R$ is a 0-dimensional local ring that is not SFT, hence non-Noetherian. 
	
	Put $S =  \mathbb{Q}[Y, \{ Z_i\}_{i=0}^\infty]$, and denote the image of $Y$ and $Z_i$ for each $i$ in $R$ by $y$ and $z_i$, respectively. If $P \in \spec R$, then we have $y \in P$ and $z_i \in P$ for each $i$ since $y^2 = z_i^2 = 0$. The ideal $Q = (Y, \{Z_i\}_{i=0}^\infty)S$ is a maximal ideal of $S$, and $P$ contains the image of $Q$. Hence, $R$ is a 0-dimensional local ring with maximal ideal $P = (y, \{z_i\}_{i=0}^\infty)R$. We note that $y \cdot P = 0$ since $y^2 = 0$ and $y z_i = 0$ in $R$ for each $i$.
	
	Next, we show that $P$ is a non-SFT ideal. Let $J$ be a finitely generated proper ideal of $R$, and let $k \in \mathbb{N}$. Since $J$ is finitely generated, there is some $s$ such that any nonzero product in the $z_t$'s with $t \geq s$ is not in $J$. Consider the element $f_k := z_s + z_{s+1} + \cdots + z_{s+k-1} \in P$. Then every term of $f_k^k$ will be 0 except for the term $k! \cdot z_s \cdot z_{s+1} \cdots z_{s+k-1}$. Since $z_s \cdot z_{s+1} \cdots z_{s+k-1} \notin J$, we have $f_k^k \notin J$. Hence, for any $k \in \mathbb{N}$, there is an element $f_k \in P$ such that $f_k^k \notin J$, and so $P$ is a non-SFT ideal, and $R$ is a non-SFT ring. 
	
	We show next that the power series ring $R \llbracket X \rrbracket$ fails to satisfy the ``WB" notion of non-Noetherian Cohen-Macaulayness. Consider the element $f := y \in R \llbracket X \rrbracket$, and let $g  \in P \llbracket X \rrbracket$. Every coefficient of $g$ is in $P$ and $y \cdot P = 0$, so we find that the product $f \cdot g$ is zero, and $f \cdot P \llbracket X \rrbracket = 0$. This means $P \llbracket X \rrbracket \subseteq (0: f)$. 
	On the other hand, if $h \in R \llbracket X \rrbracket \setminus P \llbracket X \rrbracket$, then $h$ has some unit coefficient, so $f \cdot h \neq 0$. Thus, we have $P \llbracket X \rrbracket = (0: f)$, and $P\llbracket X \rrbracket$ is a weak-Bourbaki associated prime ideal of the zero ideal $(0)$. Since $R$ is a 0-dimensional non-SFT local ring, we have $\Ht(P\llbracket X \rrbracket) = \infty$ by \cite[Corollary 32]{toankang}. Since every minimal prime divisor of the zero ideal has height 0, the set of minimal prime divisors of $(0)$ does not coincide with the set of weak-Bourbaki associated primes of $(0)$, and so $R \llbracket X \rrbracket$ fails to satisfy ``WB".
\end{ex}

For a semilocal 0-dimensional ring $R$, by Theorem \ref{unmixed} and Theorem \ref{gradeheight}, along with \cite[Proposition 2.3, Lemma 3.2]{at}, the ring $\mathcal{F} \llbracket X_1, \ldots, X_n \rrbracket$ satisfies all seven notions of non-Noetherian Cohen-Macaulayness given in \cite{at}. We record this fact to conclude.
\begin{cor}
	Let $R$ be a 0-dimensional semilocal ring, and let $R_0$ and $\mathcal{F}$ be as in Theorem \ref{unmixed}. Then $\mathcal{F}\llbracket X_1, \ldots, X_n \rrbracket$ satisfies each of the seven notions of non-Noetherian Cohen-Macaulayness presented in \cite{at}.
\end{cor}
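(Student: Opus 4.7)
The plan is to verify the seven definitions from \cite{at} by combining the two structural results just established for $S = \mathcal{F}\llbracket X_1, \ldots, X_n \rrbracket$: the unmixedness property of Theorem \ref{unmixed} (every height-generated proper ideal has all prime divisors of the same height, and its maximal prime divisors coincide with its minimal prime divisors, finite in number) and the grade-equals-height identity $\cgrade_S(I) = \Ht(I)$ of Theorem \ref{gradeheight}. Rather than re-examine each definition from scratch, I would invoke the reduction lemmas in \cite{at}, specifically \cite[Proposition 2.3, Lemma 3.2]{at}, which extract precisely these two properties as the structural content needed to verify the classical Cohen-Macaulay notion and the ideal-theoretic notions in that paper's framework.

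First I would address the grade-type notions (classical and ideal Cohen-Macaulayness, and the Hamilton-Marley variant): since Theorem \ref{gradeheight} gives $\cgrade_S(I) = \Ht(I)$ for every ideal $I$, the relevant inequality in \cite[Proposition 2.3]{at} becomes an equality and each of these notions holds in $S$. Next I would turn to the three unmixedness-type notions based on associated primes (weak-Bourbaki, Zariski-Samuel, and Krull), as formulated in Definition \ref{nnassoc}. In each case, an associated prime is in particular a prime divisor of $I$ in the sense of \cite{fuchsheinzerolberding}; since Theorem \ref{unmixed} forces every prime divisor of a height-generated ideal to be a minimal prime divisor of the same height, the set of associated primes (in any of the three senses) coincides with the set of minimal primes over $I$. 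This is exactly the content of the unmixedness condition in \cite[Lemma 3.2]{at}.

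Finally I would combine these observations to cover the remaining notion in \cite{at} (which is obtained from the previous ones via a direct comparison argument in that paper). Writing the proof then amounts to assembling the citations: Theorem \ref{gradeheight} plus \cite[Proposition 2.3]{at} yields the grade-based notions, and Theorem \ref{unmixed} plus \cite[Lemma 3.2]{at} yields the unmixedness-based notions. I do not anticipate a genuine obstacle here, since the substantive work was carried out in Sections on dimension, unmixedness, and regular sequences; the main care is only to match the terminology of \cite{at} with the prime-divisor terminology of \cite{fuchsheinzerolberding} used throughout this section, so that each of the seven definitions is seen to be an immediate consequence of one of the two structural theorems.
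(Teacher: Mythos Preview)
Your proposal is correct and matches the paper's own justification essentially verbatim: the paper derives the corollary directly from Theorem \ref{unmixed} and Theorem \ref{gradeheight} together with \cite[Proposition 2.3, Lemma 3.2]{at}, without giving any further detail. Your write-up simply expands this citation into an explicit case analysis of the seven notions, which is a reasonable elaboration of the same argument.
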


We summarize the properties of $\mathcal{F}\llbracket X_1, \ldots, X_n\rrbracket$ for a 0-dimensional semilocal ring $R$ in Table \ref{summ}.

\newcolumntype{u}{>{\columncolor[RGB]{225,225,255}} m{2.1cm}}
\newcolumntype{s}{>{\columncolor[RGB]{225,225,255}} m{4.0cm}}
\newcolumntype{v}{>{\columncolor[RGB]{255,225,235}} m{4.2cm}}
 \newcolumntype{w}{>{\columncolor[RGB]{255,200,200}} m{2.8cm}}
  \newcolumntype{q}{m{2.0cm}}
\newcolumntype{x}{>{\columncolor[RGB]{255,225,235}} m{5.0cm}}

\begin{table}[!h] 
\begin{center}
\begin{tabular}{ q| s| v | w | } 

\cline{2-4}
 & \multicolumn{1}{c|}{\textbf{Noetherian Ring} $R$}
           & \multicolumn{2}{c|}{\textbf{Non-Noetherian Ring} $R$} \\
       & \centering $ R \llbracket X_1, \ldots, X_n \rrbracket$ 
         &  \centering $ R \llbracket X_1, \ldots, X_n \rrbracket $ 
         & \centering $\mathcal{F}\llbracket X_1, \ldots, X_n \rrbracket$  \tabularnewline \hline
      \cline{2-4}
        \centering  {  Dimension } &  \centering $n$ \cellcolor[RGB]{240,240,255}  &\quad \ \centering $n$ if $R$ is \emph{SFT} \newline {\small ($\infty$ otherwise) }\cellcolor[RGB]{255,240,248}  &  \centering $n$ \cellcolor[RGB]{255,240,240}  \tabularnewline \hline
        \centering  {  Flatness } & \centering $\quad$ Faithfully Flat $\quad$ over $R$ &  \centering  { Faithfully Flat over $R$} \newline  \hspace{0.5in} { if $R$ is \emph{coherent}} \hspace{-0.28in} \vspace{-0.03in} \newline  { \scriptsize (may not be flat otherwise) }  &  \centering Faithfully Flat over $R$  \tabularnewline  \hline 
       
       \centering \vspace{0.05in} \hspace{0.001in} CM \vspace{0.05in} & \centering  \vspace{0.05in} \hspace{0.001in} Yes \vspace{0.05in}   \cellcolor[RGB]{240,240,255}  &  \centering  \vspace{0.05in}  \hspace{0.001in} Not in general \vspace{0.05in}  \cellcolor[RGB]{255,240,248}  & \centering  \vspace{0.05in}  \hspace{0.001in} Yes  \vspace{0.05in}  \cellcolor[RGB]{255,240,240}  \tabularnewline  
\end{tabular}
\end{center}
\caption{Properties of power series rings over a 0-dimensional semilocal ring $R$ that is integral over an Artinian subring $R_0$.} \label{summ}
\end{table}

\section*{Acknowledgements}

The author would like to thank his advisor Bruce Olberding at New Mexico State University for his suggestions, generosity, and encouragement.



%
%
%
%


%

%



\end{document}